\definecolor{webgreen}{rgb}{0,.5,0}
\definecolor{webbrown}{rgb}{.8,0,0}
\definecolor{emphcolor}{rgb}{0.5,0.95,0.95}
\ifpdf \hypersetup{pdftex,
	%             pdftitle={Decision Making with Poisson process},
	%             pdfauthor={Semih Sezer},
	pdfstartview=FitH, %%Fit, FitB, FitH
	bookmarksopen=true,
	bookmarksnumbered=true
} \else \hypersetup{dvips} \fi
\numberwithin{equation}{section}
\newtheorem{theorem}{Theorem}[section]
\newtheorem{proposition}{Proposition}[section]
\newtheorem{remark}{Remark}[section]
\newtheorem{lemma}{Lemma}[section]
\newtheorem{assump}{Assumption}[section]
\numberwithin{remark}{section} \numberwithin{proposition}{section}
\numberwithin{corollary}{section}
\newcommand {\R}{\mathbb{R}}
\newcommand {\cF}{\mathcal{F}}
\newcommand {\A}{\mathcal{A}}
\newcommand {\N}{\mathbb{N}}
\newcommand {\p}{\mathbb{P}}
\newcommand {\E}{\mathbb{E}}
\newcommand{\diff}{{\rm d}}
\newcommand{\lev}{L\'{e}vy }
\newcommand{\Ind}{\mathbb{I}}
\newcommand{\cL}{\mathcal{L}}
\begin{document}
	\title{{On optimal periodic dividend and capital injection strategies for general L\'evy models}}

	\thanks{This version: \today.   }
	\author[D. Mata]{Dante Mata$^{\star}$}
	\thanks{$\star$\, D\'epartement de Math\'ematiques, Universit\'e du Qu\'ebec \`a Montr\'eal (UQAM), 201 av. Pr\'esident-Kennedy, Montr\'eal, QC H2X 3Y7, Canada. Email: mata\_lopez.dante@uqam.ca}
	\thanks{$\sharp$\, Department of Probability and Statistics, Centro de Investigaci\'on en Matem\'aticasA.C. Calle Jalisco
		s/n. C.P. 36240, Guanajuato, Mexico. Email: jluis.garmendia@cimat.mx}
%	\thanks{$\dagger$\, Email: mata_lopez.dante@uqam.ca}
	\author[K. Noba]{Kei Noba$^\ddagger$}
	\thanks{$\ddagger$\, Department of Mathematics, Graduate School of Science, The University of Osaka. 1-1 Machikaneyama, Toyonaka 560-0043, Osaka, Japan. Email: knoba@math.sci.osaka-u.ac.jp}
	\author[J. L. P\'erez]{Jos\'e-Luis P\'erez$^{\sharp}$}
%	\thanks{$\sharp$\, Email: jluis.garmendia@cimat.mx}
	\date{}
	
	\begin{abstract} 
		We consider a version of de Finetti's dividend problem, with the bail-out contraint to keep the surplus non-negative, and where dividend payments can only be made at the arrival times of an independent Poisson process. For a general \lev process with positive and negative jumps, we show the optimality of a periodic-classical reflection strategy that pays the excess above a given level at each Poisson arrival time, and also reflects below at 0 in the classical sense.\\
		\noindent \small{\noindent  AMS 2020 Subject Classifications: 60G51, 93E20, 91G80\\
			%JEL Classifications: \\
			\textbf{Keywords:} Periodic and singular control strategies, L\'evy processes, optimal dividends, capital injection.}
	\end{abstract}
	
	\maketitle
	
	\section{Introduction}
%\blue{[Dante, I think the introduction needs to contain more about previous work on the bailout problem with a lot more references.]}\green{[Only minor mistakes remain, so I think it's about time to wrap things up. It would be a good idea to carefully check for typos and review the entire work with an eye on making it submission-ready.]}\yellow{[I have added more references in the introduction regarding the periodic dividend constraint, and the early works of optimal control for processes with two-sided jumps.]}\green{[It is necessary to remove references that are not used within the text. We must pay attention to correcting the typos as well. I would appreciate it if you could also indicate exactly what changes were made regarding the minor typos.]}
In this paper, we consider the bail-out version of de Finetti's dividend problem, where the goal is to find a joint optimal dividend and capital injection strategy that maximises the expected net present value (NPV) of dividend payments minus the cost of capital injections. Specifically, we {study} the case where a \lev process with both positive and negative jumps models the surplus of an insurance company.

On one hand, we will study an extension of the classical dividend problem by imposing the constraint that dividend payments can only be performed at discrete observation times, independent of the surplus process. This is commonly known as the \textit{periodic} dividend constraint. This constraint aims to reflect the {practical setting in which} the shareholders of a company can only make the decision of paying dividends at discrete times. Early {studies on} periodic decision times {include} Dupuis and Wang \cite{DW2002} and Wang \cite{Wang2001} for the Brownian model, in the context of optimal stopping problems. Regarding the application of the periodic constraint in the dividend problem, {early contributions} include Albrecher et al. \cite{ACT2011, ACT2013} for the compound Poisson model, as well as Albrecher et al. \cite{AGS2011} for the Brownian model. It has been shown that when the underlying dynamics {follow} a \lev process with one-sided jumps, {a barrier strategy—paying out any excess above a certain level at each payment opportunity—is optimal}. {Optimality} of the barrier strategy under the periodic dividend constraint has been shown, for instance, in Cheung and Zhang \cite{CZ2018a} for the compound Poisson model, in Avanzi et al. \cite{ACWW2013} under a spectrally one-sided model with an additional solvency constraint, and in Noba et al. \cite{NobPerYamYanb} under the spectrally negative \lev model. {Our main goal is to extend these results to the more general case of a L\'evy process with two-sided jumps. In particular, we study a version of the periodic dividend constraint in which dividend decision times correspond to the arrival times of a Poisson process, independent of the surplus process.}

On the other hand, the bail-out extension of de Finetti's problem imposes the constraint that capital injections have to be performed to keep the surplus non-negative uniformly in time. In our setting, we assume that {capital injections} can be {made} continuously in time. The bail-out extension has been extensively studied under the spectrally one-sided {L\'evy} model and with various dividend constraints, for instance, in Avram et al. \cite{AvrPalPis2007} in the case of classical dividend controls, in P\'erez et al. \cite{PYY2018} {for} absolutely continuous dividend controls; {and} in Noba et al. \cite{NobPerYu} under the Markov additive model with one-sided jumps. The combination of periodic dividend controls {(in particular, Poisson decision times)} with capital injections has also been studied {for} spectrally one-sided \lev processes. {The Poissonian decision time framework allows the derivation of semi-explicit expressions for the NPV in terms of the so-called scale function.} For instance, Noba et al. \cite{NobPerYamYan} {address} the spectrally negative case; P\'erez and Yamazaki \cite{PerYam} {study} the dual case {(the bail-out dividend problem driven by a spectrally positive L\'evy process)}; and more recently, Moreno-Franco and P\'erez \cite{MFP24} {analyze the additional constraint of transaction costs upon each dividend payment}. {These studies have} shown that a double barrier strategy is optimal, with a suitable upper barrier for dividend payments {and capital injections applied at level $0$ to prevent the surplus from becoming negative. Mata et al. \cite{MMNP} also studied a similar problem—the generalization of Noba et al. \cite{NobPerYamYan}—for spectrally negative Markov additive processes. Notably, in these studies, the scale function plays a crucial role, enabling the authors to propose a candidate optimal dividend barrier and to prove its existence using fluctuation identities for spectrally one-sided L\'evy processes under periodic observations.}

{Extending to processes with two-sided jumps is more challenging, as the scale function is no longer available. To overcome this challenge, we rely on the pathwise properties of the L\'evy process. Our analysis compares the behavior of the sample paths of the controlled L\'evy process when a small perturbation is added to the initial condition.} These comparison results allow us to compute the first derivative of the NPV of a double barrier strategy and, moreover, {to obtain} a concise \textit{stochastic representation} of its first derivative. {With this representation in hand}, we propose a candidate for the optimal strategy, {choosing} the barrier that satisfies a \textit{gradient} condition{—specifically, the conjecture that the slope of the value function equals $1$ at the barrier. We then verify optimality by showing that the {NPV of the} candidate {optimal strategy} satisfies the associated variational inequalities. Noba \cite{Nob2019} applied this pathwise analysis to solve the bail-out dividend problem with classical dividend payments; similarly, in Noba \cite{Nob2023}, the technique was extended to the case of absolutely continuous dividend constraints.}

{To the best of our knowledge, a stochastic control problem for a general L\'evy process under periodic controls was studied in Noba and Yamazaki \cite{NobYam2022}. In their work, to offset the periodic controls, a running cost function is introduced. The main difference in our paper lies in the introduction of an additional classical control representing capital injections, which offsets the dividend payment process and adds complexity to the pathwise analysis, as we must consider the interaction between periodic and classical controls.}

%We remark that our analysis is quite general, as we only require mild integrability and technical conditions on the underlying \lev process, hence it includes cases when $X$ has paths of bounded and unbounded variation.\\
This paper is organised as follows. In Section \ref{Sec:Prelim} we introduce the standing assumptions regarding the underlying \lev process, formulate the bail-out dividend problem under the periodic dividend constraint, and {define} the periodic-classical barrier strategies along {with a proof of their admissibility}. In Section \ref{Sec3}, we perform a pathwise comparison of the controlled processes {after introducing a} perturbation to the initial condition. In Section \ref{sec:candidate}, we prove the existence of our candidate optimal dividend barrier, and in Section \ref{sec:verification}, {we rigorously verify its optimality}. {Proofs of some auxiliary results are presented in the Appendix.}  
\section{Preliminaries}\label{Sec:Prelim}
We introduce and study a version of the bailout dividend problem driven by a general \lev process, where the dividend decisions can only be made at the arrival times of an independent Poisson process.
\subsection{L\'evy processes}
%\green{[I made some changes, based on the Assumptions that we used in \cite{MNPY2023} and also based on the recent works by Noba \& Yamazaki on Singular control, so that the driftless Compund Poisson Process is excluded. That way, we could make all of the caculations and arguments with the true derivatives, rather than densities, right?]}\\
%\red{[This section is simply copied and pasted from a previous paper. ]}
%	\yellow{[In the previous version, we introduced the filtration $\mathcal{F}$ generated by $X$ in the first paragraph of this section, but in fact we do not use it here, so I removed that sentence. Now the filtration $\mathcal{F}$ is defined in Subsection \ref{Sec202}]\\}
{Let $(\Omega , \mathcal{F}, \p)$  be} a probability space hosting the \lev process $X={( {X(t)}:t \geq 0 )}$.  
%We denote by $\mathbb{F} := (\mathcal{F}_{t} : t \geq 0)$ the filtration generated by $X$.
For $x\in\R$, we denote by $\p_x$ the law of $X$ when it starts at $x$, and by $\E_x$ its associated expectation operator. Throughout the paper, let $\psi$ denote the characteristic exponent of $X$, which satisfies 
\begin{align}
	\E_0\left[e^{i\lambda {X(t)}}\right]=e^{-t\psi(\lambda)},  \quad \lambda \in \R, ~t\geq 0. 
\end{align}
In particular, we {use the notation} $\mathbb{P} = \mathbb{P}_0$. 
The characteristic exponent $\psi$ is known to take the form  
\begin{align}
	\psi (\lambda) := -i\gamma\lambda +\frac{1}{2}\sigma^2 \lambda^2 
	+\int_{\R \backslash \{0\} } (1-e^{i\lambda x}+i\lambda x1_{\{|x|<1\}}) {\nu}(\diff x) , ~~~~~~\lambda\in\R. \label{202a}
\end{align}
Here, $\gamma\in\R$, $\sigma\geq 0$, and $\nu $ is a measure on $\R \backslash \{0\}$, known as the L\'evy measure, which satisfies 
\begin{align}
	\int_{\R\backslash \{ 0\}}(1\land x^2)  {\nu} (\diff x) < \infty. 
\end{align}
Recall that the process $X$ has paths of bounded variation if and only if $\sigma= 0$ and $\int_{|x|<1} |x|\nu(\diff x)<\infty$. When this holds, we can write
\begin{align}
	\psi(\lambda) := -i\delta{\lambda}+\int_{\R \backslash \{0\} }  (1-e^{i\lambda x}) {\nu} (\diff x),  \label{204}
\end{align}
where
\begin{align}
	\delta := \gamma-\int_{|x|<1}x  {\nu}(\diff x).
\end{align}
%In this case, we assume that $\delta> 0$.\green{[Is this Assumption necessary? I think we can omit it.]}
%\blue{[As you said, I guess it is not necessary.]}
%\\
Throughout this work we make the following assumptions, {which are necessary to guarantee the well-posedness of the optimisation problem}. 
%\green{[I guess we should write why the following assumptions are necessary.]}\blue{[Agreed]}\yellow{[Agree.]} \green{[I think it is necessary to provide a more specific technical explanation about where each assumption is used.]}\yellow{[Added a brief explanation after the assumptions.]}
\begin{assump}\label{assump1}
	%\green{[I modified this Assumption, I just deleted the part of $X$ having non-monotone paths.]}\blue{[Ok.]} 
	We assume that $X$ satisfies $\E[ |X(1)| ] < \infty$.
\end{assump}
%\green{[Added this.]} \blue{[I wrote the above assumption in my first paper. However, I guess ``$X$ does not have monotone paths" is not necessary.]}\green{[I agree, that maybe it is not necessary. The important assumption is that $X$ is not a driftless compound Poisson process.]}
\begin{assump}\label{assump1a} We assume that $X$ is not a driftless compound Poisson process.
\end{assump}
We remark that Assumption \ref{assump1} guarantees that the integrability condition \eqref{r_adm} {(introduced in the following section)} is satisfied for the classical reflection control.{ On the other hand, Assumption \ref{assump1a} {implies} that the potential measure of $X$ has no atoms, and it is also used in the proof of Lemma \ref{Lem_Converge} (see e.g. the proof of Lemma 3.2 in \cite{NobYam2022}). %[Added this phrase regarding the importance of Assumption \ref{assump1a}.] 
}%\green{[If $-\E[ X(1)\land 0 ] < \infty$ does not hold, there will be no strategies that satisfy \eqref{r_adm}. Additionally, if $\E[ X(1)\lor 0 ] < \infty$ does not hold, there exist strategies that satisfy $\E_x \left[ \int_{[0,\infty)} e^{-qt} \diff L_r^\pi(t) \right]=\infty$, rendering this problem meaningless. It might be a good idea to write an explanation with this in mind.]} On the other hand, Assumption \ref{assump1a} imples that the potential measure of $X$ has no atoms, and it is also used in the proof of Lemma \ref{Lem_Converge} (see e.g. the proof of Lemma 3.2 in \cite{NobYam2022}).
	\subsection{The optimal Poissonian dividend problem with classical capital injection}
	%\green{[Added this. The construction of the periodic classical barrier strategies should be the usual construction]}\blue{[Thanks!]}\\
\label{Sec202}
	A dividend/capital injection strategy is a pair of processes $\pi :=  (L_r^\pi(t), R_r^\pi(t): t\geq 0)$, where $L_r^\pi$ represents the cumulative amount of dividends, and $R_r^\pi$ is the cumulative amount of capital injections.\\
	Regarding the dividend strategies, we assume that dividend payments can only be made at the arrival times $\mathcal{T}_r := ({T_i}:i\geq 1)$ of a Poisson process $N_r = (N_r(t):t\geq0)$ with intensity $r>0$ which is independent of $X$. In other works, we assume that $L_r^\pi$ admits the form
	\begin{align}
	L_r^\pi(t) = \int_{[0,t]} \varrho^\pi (s) \diff N_r(s), \quad t\geq 0, \label{b001}
	\end{align}
	for some c\`{a}gl\`{a}d process $\varrho^\pi$ adapted to the filtration $\mathbb{F}:=(\mathcal{F}_{t} : t \geq 0)$ %\blue{[Change the notation? With this one it appears that it is the filtration asssociated to $N$]}\green{[Ok.]}\yellow{OK. Removed the superscript.} 
	which is generated by the processes $X$ and $N_r$.\\
	Regarding the capital injections, we assume that $R_r^\pi$ is a non-decreasing, right-continuous, and $\mathbb{F}-$adapted process, with $R_r^\pi(0-)=0$. Contrary to the dividend payments, capital injections can be made continuously in time. In addition, the process $R_r^\pi$ must satisfy
	\begin{equation}\label{r_adm}
	\E_x \left[ \int_{[0,\infty)} e^{-qt} \diff R_r^\pi(t) \right] < \infty, \quad x \geq 0,
	\end{equation}
	where $q>0$ represents the rate of discounting.\\
	The corresponding risk process associated to the strategy $\pi$ is given by $U_r^\pi(0-) = X(0)$ and
	\[
	U_r^\pi(t) := X(t) - L_r^\pi(t) + R_r^\pi(t), \quad t\geq 0.
	\]
	We denote by $\A$ the set of strategies satisfying the constraints mentioned above and ensuring that $U_r^\pi(t) \geq 0$ for all $t\geq 0$ a.s. We call a strategy $\pi$ \textit{admissible} if $\pi \in \A$.\\
	We consider that $\beta > 1$ is the cost per unit of injected capital, then our aim is to maximise the expected net present value (NPV)
	\[
	v_\pi(x) := \E_x\left[ \int_{[0,\infty)} e^{-qt} \diff L_r^\pi(t) - \beta \int_{[0,\infty)} e^{-qt} \diff R_r^\pi(t) \right], \quad x\geq 0,
	\]
	over the set of admissible strategies $\A$. Hence, the problem is to compute and characterise the value function
	\begin{equation}\label{value}
		v(x) := \sup_{\pi \in \A} v_\pi(x), \quad x \geq 0,
	\end{equation}
	and obtain an optimal strategy $\pi^\ast \in \A$ that attains \eqref{value}%\green{[$\pi^\ast$ does not appear in the equation above.]}\blue{[No need right? Is just a one of the strategies contained in the supremum right?]}\green{[Ok]}
	, if it exists. 
	\subsection{Periodic-classical barrier strategies}\label{Sec203}
	Our objective is to show that a \textit{periodic-classical} barrier strategy $\pi^{0,a}$ is optimal for a suitable selection of the reflection barrier $a\geq 0$.\\
	The controlled process associated with these strategies is known as the \textit{\lev process with periodic reflection above and classical reflection below}, and is denoted by $U_r^{0,a}$. We provide a construction for this process, as well as for the cumulative amounts of periodic and classical reflection, denoted by $L_r^{0,a}$ and $R_r^{0,a}$, respectively. The construction provided here follows the spirit of the one provided in Section 4 of \cite{AvrPalPis2007}.\\
	Set ${\eta}=0$, then the periodic-classic barrier strategy can be defined by repeatedly performing the following operations. Note that $L_r^{0,a}(0)=R_r^{0,a}(0-)=0$. 
	\begin{enumerate}\addtocounter{enumi}{-1}
		\item[($\ast$)] Set  
		\begin{align}
		Z(t) =
		\begin{cases}
		X(t),\qquad &\text{if }{\eta}=0,\\
		a+ X(t) - X({\eta}),\qquad &\text{if }{\eta}>0,
		\end{cases}
		\end{align}
		and let ${\eta}^{\prime} := \inf \lbrace T_i \in \mathcal{T}_r : T_i \geq {\eta} \text{ and } \tilde{U}_r^{0,a}(T_i) > a \rbrace$, where
		\[
		\tilde{R}_r^{0,a}(t) = R_r^{0,a}({\eta}-)  - \inf_{{\eta} \leq s \leq t}\left( Z(t) \wedge 0 \right), \quad \tilde{U}_r^{0,a}(t) = Z(t) + R_r^{0,a}(t) - R_r^{0,a}({\eta}-), \quad t \geq {\eta}.
		\]
		We set $L_r^{0,a}(t) = L_r^{0,a}({\eta})$, $R_r^{0,a}(t) = \tilde{R}_r^{0,a}(t)$ and $U_r^{0,a}(t) = \tilde{U}_r^{0,a}(t)$ for $t \in [{\eta}, {\eta}^{\prime})$, and $L_r^{0,a}({\eta}^{\prime}) = L_r^{0,a}({\eta})+\tilde{U}_r^{0,a}({\eta}^{\prime})-a$, $R_r^{0,a}({\eta}^{\prime}) = \tilde{R}_r^{0,a}({\eta}^{\prime})$ and $U_r^{0,a}({\eta}^{\prime}) = a$. Reset ${\eta} = {\eta}^{\prime}$ and go to the top of Step ($\ast$).
	\end{enumerate}
	It is clear that it admits a decomposition
	\begin{align}
		U_r^{0,a}(t) = X(t) - L_r^{0,a}(t) + R_r^{0,a}(t), \quad t \geq 0, \label{a001}
	\end{align}
	where $L_r^{0,a}(t)$ and $R_r^{0,a}(t)$ are, respectively, the cumulative amounts of periodic and classical reflection until time $t$. {Note} that the resulting process $U^{0, a}_r$ is a strong Markov process. We define the crossing times
	\begin{equation}\label{cross_times}\tau_0^- := \inf\lbrace t \geq 0 : X(t) < 0 \rbrace \text{ and } T_a^+ := \inf\lbrace T_i \in \mathcal{T}_r : X(T_i) > a \rbrace. \end{equation}
	%\green{[Following your comments on the previous version, I modified the proof.]}\blue{[Thanks!]}
\begin{remark} \label{Rem201}
The periodic-classical reflection strategy is indeed admissible. %\yellow{[For admissibility, I think it would be better to also mention that \eqref{b001} is satisfied.]}\green{[Agreed, I added that comment at the end of this remark]} 
In order to show this, let ${T_0}=0$, and recall that  for each $i\geq 1$, ${T_i}$ denotes the $i$-th jump time of the Poisson process with rate $r>0$. Then, by the strong Markov property we have, for $x\in\R$, %\yellow{[Removed the incorrect equality, and corrected the $\E_{U_r^{0,a}({T_{i-1}})}$]}
%\begin{align}
%\E_x \left[ \int_{[0,\infty)} e^{-qt} \diff R_r^{0,a}(t) \right] &= \sum_{i=1}^{\infty}  \E_x \left[ \int_{[T(i-1),T(i))} e^{-qt} \diff R_r^{0,a}(t) \right] \notag \\
%&= \sum_{i=1}^{\infty} \E_x\left[ e^{-q T(i-1)} \right] \E_x \left[ \int_{[0,T(i)-T(i-1))} e^{-qt} \diff R_r^{0,a}(t) \right] \notag \\
%&= \sum_{i=1}^{\infty} (M_e)^{i-1} \E_x \left[ \int_{[0,T(i)-T(i-1))} e^{-qt} \diff R_r^{0,a}(t) \right], \label{adm1}
%%& \leq \E_0 \left[ \int_{[0,\zeta)} e^{-qt} \diff R_r^{0,a}(t) \right]
%\end{align}
%\blue{[I guess $U_r^{0,a}(T(i))$ may not be equal to $x$. So, how about the following?]}
\begin{align}
\E_x \left[ \int_{[0,\infty)} e^{-qt} \diff R_r^{0,a}(t) \right] &= \sum_{i=1}^{\infty}  \E_x \left[ \int_{[{T_{i-1}},{T_i})} e^{-qt} \diff R_r^{0,a}(t) \right] \notag \\
&= \sum_{i=1}^{\infty} \E_x\left[ e^{-q T_{i-1}}  \E_{U_r^{0,a}({T_{i-1}})} \left[ \int_{[0,T_1)} e^{-qt} \diff R_r^{0,a}(t) \right] \right]-(0\land x)\notag \\
%&= \sum_{i=1}^{\infty} \E_x\left[ e^{-q T_{i-1}}  \E_{U_r^{0,a}(T_i\blue{T_{i-1}?})} \left[ \int_{[0,T_1)} e^{-qt} \diff  (-\underline{X}(t)) \right] \right]\notag \\
%&\blue{\text{[I guess the previous equality is not correct? Delete?]}}
%\green{\text{[I agree. If we do not change the form of the term, the inequality is correct. ]}}
%\notag\\
&\leq \sum_{i=1}^{\infty} \E_x\left[ e^{-q T_{i-1}}\right]  \E_{0} \left[ \int_{[0,T_1)} e^{-qt} \diff  (-\underline{X}(t)) \right]-(0\land x) \label{adm2} \\
&\leq \sum_{i=1}^{\infty} (M_e)^{i-1} \E_{0} \left[ \int_{[0,\infty)} e^{-qt} \diff  (-\underline{X}(t)) \right] -(0\land x)\\
& =\left(\sum_{i=1}^{\infty} (M_e)^{i-1} \right) \left( 
\sum_{k=0}^\infty\E_{0} \left[ \int_{[k,k+1)} e^{-qt} \diff  (-\underline{X}(t)) \right]
\right)-(0\land x)\\
&\leq \left(\sum_{i=1}^{\infty} (M_e)^{i-1} \right) \left( 
\sum_{k=0}^\infty e^{-q k}\E_0 \left[ -\left(\inf_{t \in [0, 1]} X(t)  \right)\right]
\right)-(0\land x),
 \label{adm1}
\end{align}
%\green{[In \eqref{adm2} should it be a '$\leq$' instead of '$=$'? I guess that we are using that $x \mapsto \E_{x} \left[ \int_{[0,T(1))} e^{-qt} \diff  (-\underline{X}(t)) \right]$ is non-increasing. I agree that your proof is more concise, so we should use it.]}
where $M_e := \E\left[ e^{-q \zeta} \right]$ and $\zeta$ is an exponential random variable with parameter $r$. %, independent of $X$\green{[Since $\zeta$ and $X$ do not appear within the same expected value, it does not seem necessary for them to be independent.]}. 
Finally, thanks to Assumption \ref{assump1} and (3.10) of \cite{Nob2019}, we obtain that the right-hand side of \eqref{adm1} is finite.\\
To conclude, we note that we can write $L^{0,a}_r$ in terms of the $\mathbb{F}$-c\`agl\`ad process $\varrho^{0,a} = ( \varrho^{0,a}(t) )_{t\geq 0}$ with
\[
\varrho^{0,a}(t) = (U^{0,a}_r(t-)-a)^+, \quad t\geq 0.
\]
%\blue{[Check if the definition of $\varrho^{0,a}$ is OK.]}
%\green{[I feel it correct but $\varrho^{0,a}(t):=(U^{0,a}_r(t-)-a)^+$ is enough?]}\blue{[Agreed]}\yellow{[Agree that $\varrho^{0,a}(t):=(U^{0,a}_r(t-)-a)^+$ is enough, I wanted to make it more explicit in the previous version.]}
It now follows that we can write
\[
L^{0,a}_r(t) = \int_{[0,t]} \varrho^{0,a}(s) \diff N_r(s), \quad t\geq 0.
\]
%\blue{[Here I think it is important to say how Assumption \ref{assump1} implies that \eqref{adm1} is finite.]}
%\red{[Sorry, I forget to write it.]}
%\red{[Please discuss this with me tomorrow. $\E_0 \left[|{X}(1)|;{X}(1)<0 \right]<\infty$ is necessary because without this condition, all strategies may be not addmissible.]}

%\begin{align*}
%	\E_x\left[ \int_{[0,\zeta)} e^{-qt} \diff (- \underline{X}(t)) \right] &= \E_x\left[ \int_{[0,\infty)} e^{-(q+r)t} \diff (- \underline{X}(t)) \right] \\
%	& \leq \E_0\left[ \int_{[0,\infty)} e^{-(q+r)t} \diff (- \underline{X}(t)) \right] < \infty.
%\end{align*}
%Thus, we can bound the right-hand side of \eqref{adm1} by
%\[
%\sum_{i=1}^{\infty} (M_e)^{i-1} \E_x \left[ \int_{[0,T(i)-T(i-1))} e^{-qt} \diff R_r^{0,a}(t) \right] \leq \E_0\left[ \int_{[0,\infty)} e^{-(q+r)t} \diff (- \underline{X}(t)) \right] \sum_{i=1}^{\infty} (M_e)^{i-1} < \infty.
%\]
%In addition, note that on the stochastic interval $[T(i-1), T(i))$ the process $R_r^{0,a}$ is equal in distribution to the \textit{classical} capital injection process studied in \cite{Nob2019}, denoted by $R_{\infty}^{0,a}$ Hence by Lemma 3.2 in \cite{Nob2019} the term $\E_x \left[ \int_{[0,T(i)-T(i-1)} e^{-qt} \diff R_r^{0,a}(t) \right] $ is bounded above by $\E_0 \left[ \int_{[0,\zeta)} e^{-qt} \diff R_r^{0,a}(t) \right] < \infty$. Thus, we have
%\[
%\E_x \left[ \int_{[0,\infty)} e^{-qt} \diff R_r^{0,a}(t) \right] \leq \E_0 \left[ \int_{[0,\zeta)} e^{-qt} \diff R_r^{0,a}(t) \right]  \sum_{i=1}^{\infty} (M_e)^{i-1} < \infty.
%\]
\end{remark}
%\red{
%\begin{remark}
%Under Assumption \ref{assump1}, 
%\end{remark}
%}
%Our main objective is to show the optimality of a periodic-classical barrier strategy with a suitable upper barrier $a^\ast$.

We also introduce the \lev process with periodic reflection above at a level $a\geq 0$, denoted by $U_r^{a}$, as it will be used in various proofs and constructions in this paper. This process is constructed inductively as follows. %\yellow{[Rewrote the definition of $U_r^{a}$ to make it consistent with the previous construction]}.
%\green{[I think this part overlaps with what was written above.]}\blue{[How Kei? This is the construction for the L\'evy process with only periodic reflection above without reflection below right?]}
%\green{[Sorry, I misread this. This is necessary.]}
\\
{Set ${\eta}= 0$. 
\begin{enumerate}\addtocounter{enumi}{-1}
	\item[($\ast$)] Set  
	\begin{align}
		Z(t) =
		\begin{cases}
			X(t),\qquad &\text{if }\eta=0,\\
			a+ X(t) - X({\eta}),\qquad &\text{if }{\eta}>0,
		\end{cases}
	\end{align}
	and let ${\eta}^{\prime} := \inf \lbrace T_i \in \mathcal{T}_r : T_i \geq {\eta} \text{ and } \tilde{U}_r^{0,a}(T_i) > a \rbrace$, where
	\[
	\tilde{U}_r^{0,a}(t) = Z(t), \quad t \geq {\eta}.
	\]
	We set $U_r^{0,a}(t) = \tilde{U}_r^{0,a}(t)$ for $t \in [{\eta}, {\eta}^{\prime})$, and $U_r^{0,a}({\eta}^{\prime}) = a$. Reset ${\eta} = {\eta}^{\prime}$ and go to the top of Step ($\ast$).
\end{enumerate}
}
%First, we have
%\[
%U_r^a(t) = X(t), \quad t \in [0,\tilde{T}^+_a(1)),
%\]
%where $\tilde{T}^+_a(1):= \inf\lbrace T_i \in \mathcal{T}_r: X(T_i) \geq a \rbrace$. Upon time $\tilde{T}^+_a(1)$, the process jumps downwards by $X(\tilde{T}^+_a(1)-) - a$ so that $U_r^a(\tilde{T}^+_a(1)) =a$. In the next step, for $t \in [\tilde{T}^+_a(1), \tilde{T}^+_a(2))$ we set 
%\[U_r^a(t) = a + X(t) - X(\tilde{T}^+_a(1)),\]
% where $\tilde{T}^+_a(2):= \inf\lbrace T_i > \tilde{T}^+_a(1) : U_r^a(T_i) \geq a \rbrace$. The process $U_r^a$ is obtained by repeating this procedure. 
Note that the resulting process $U_r^a$ is a strong Markov process. We define the first down-crossing time below 0 of the process $U_r^a$ as
\begin{equation}\label{fpt_poisson}
	\kappa_0^{a,-}(r) := \inf \left\lbrace t\geq 0: U_r^{a}(t) < 0 \right\rbrace.
\end{equation}
{Before going into the main result and its proof, we make one more assumption on $X$. We remark that this is a technical assumption that will be used in the verification of optimality, however the results from Sections \ref{Sec3} and \ref{sec:candidate} remain valid independently of this condition.}
%\green{[Regarding this assumption, it might be necessary to add an explanation about which parts it is needed.]}\yellow{[Added a brief explanation in the preceding paragraph.]}
%\green{[Shouldn't we specifically indicate which theorems or lemmas make use of this assumption?]}
\begin{assump}\label{assump2a} When the process $X$ has paths of unbounded variation, we assume, for any $a >0$, that the mapping $x \mapsto \E_{x}\left[ e^{-q \kappa_0^{a,-}(r)} ; \kappa_0^{a,-}(r)<\infty \right]$
has an a.e.-continuous Radon-Nykodym density  bounded on every compact set of $(0,\infty)$. %; where $\kappa_0^{a,-}(r) := \inf\{ t \geq 0 : U_r^a(t) < 0 \}$, and $U_r^a$ denotes the \lev process with periodic reflection above at $a$.
\end{assump}
%\blue{[As you said I think such assumptions are necessary. However, I guess this type of smoothness assumption is used to apply generator for $v_{a^\ast}$. Thus, is it enough to give the smoothness for $x \mapsto \E_{x}\left[ e^{-q \kappa_0^{a,-}(r)} ; \kappa_0^{a,-}(r)<\infty \right]$?]}[I agree that for this work we only need the Assumption on the smoothness of $x \mapsto \E_{x}\left[ e^{-q \kappa_0^{a,-}(r)} ; \kappa_0^{a,-}(r)<\infty \right]$. In that case, even the proof of Lemma \ref{lem_smooth} will be simplified. Maybe we can move this Assumption to the verification section, we only use it there.]}\\
%\green{[Here we have to add the usual Proposition describing the class of \lev process that satisfy Assumption \ref{assump2a}, and the proof is almost the same.]}
{The following lemma gives an example of processes which {satisfy} the Assumption \ref{assump2a}. We put its proof in Appendix \ref{App002}.}
\begin{lemma}\label{lem_smooth}
	If {$X$ has unbounded variation paths and} the \lev measure satisfies $\nu(0,\infty) < \infty$ or $\nu(-\infty,0) < \infty$, then Assumption \ref{assump2a} is satisfied.
\end{lemma}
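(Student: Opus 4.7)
The plan is to reduce to the spectrally one-sided case via a L\'evy--It\^o decomposition and then invoke scale-function machinery. Without loss of generality, I assume $\mu := \nu((-\infty,0)) < \infty$; the case $\nu((0,\infty)) < \infty$ is symmetric upon passing to the dual process $-X$. Write $X = Y + J$, where $J$ is an independent compound Poisson process with intensity $\mu$ and jump law $\nu|_{(-\infty,0)}/\mu$, and $Y := X - J$. Then $Y$ is a spectrally positive L\'evy process, and since $J$ has bounded variation while $X$ has unbounded variation, $Y$ is of unbounded variation as well. In particular, the $q$-scale function $W^{(q)}$ associated with $Y$ is absolutely continuous on $[0,\infty)$ with a locally bounded density.

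Let $\sigma_1 \sim \mathrm{Exp}(\mu)$ be the first jump time of $J$, independent of $(Y, N_r)$, and let $V_r^a$ denote the process obtained from $Y$ by periodic reflection at the barrier $a$, with first down-crossing of $0$ denoted $\tilde\kappa$. Prior to $\sigma_1$, $U_r^a$ coincides with $V_r^a$. Conditioning on $\sigma_1$ and on the size of the first negative jump of $J$, and applying the strong Markov property, I would obtain
\begin{equation}
h(x) := \E_x\bigl[e^{-q\kappa_0^{a,-}(r)};\,\kappa_0^{a,-}(r)<\infty\bigr] = \E_x^Y\bigl[e^{-(q+\mu)\tilde\kappa};\,\tilde\kappa<\infty\bigr] + \int_{[0,\infty)} R^{(q+\mu)}(x, \diff u)\, g(u),
\end{equation}
where $g(u) := \int_{(-\infty,0)} h(u+v)\,\nu(\diff v)$ and $R^{(q+\mu)}(x,\cdot)$ denotes the $(q+\mu)$-resolvent measure of $V_r^a$ killed at $\tilde\kappa$, starting from $x$.

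The task then reduces to showing that both summands are absolutely continuous in $x$ on $(0,\infty)$, with an a.e.-continuous derivative bounded on compact subsets. Adapting the fluctuation identities derived in P\'erez--Yamazaki \cite{PerYam} for spectrally positive periodically reflected processes, both $x \mapsto \E_x^Y[e^{-(q+\mu)\tilde\kappa};\tilde\kappa<\infty]$ and the density $r^{(q+\mu)}(x,u)$ of $R^{(q+\mu)}(x,\cdot)$ with respect to Lebesgue measure admit closed-form representations in terms of $W^{(q+\mu)}$ together with its Poisson-modified cousins built from the $r$-resolvent of $Y$. The unbounded-variation regularity of $W^{(q+\mu)}$ delivers the required absolute continuity of the first summand directly. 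For the second summand, since $|g|$ is bounded by $\mu$, a dominated-convergence argument applied to difference quotients transfers the $x$-regularity of $r^{(q+\mu)}(x,\cdot)$ to the full integral against $g$.

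The hard part will be the analysis of $r^{(q+\mu)}(x, u)$: extracting the scale-function representation of the killed resolvent of the periodically reflected spectrally positive process, and establishing an integrable majorant for $\partial_x r^{(q+\mu)}(x, u)$ that is uniform for $x$ in compact subsets of $(0,\infty)$, so as to legitimise differentiation under the integral. This hinges on a careful asymptotic analysis of the Poisson-modified scale-function factors entering the representation; once such a majorant is in hand, the remaining arguments are routine, and assembling the two contributions produces the a.e.-continuous, locally bounded Radon--Nikodym density of $h$ required by Assumption \ref{assump2a}.
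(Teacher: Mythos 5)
Your proposal is essentially the paper's own proof: strip off the finite-activity jumps as an independent compound Poisson process, condition on its first jump time via the strong Markov property, and reduce the question to the smoothness of a first-passage functional plus a killed resolvent of the periodically reflected spectrally one-sided remainder, both controlled by the $C^1$-regularity of the scale function in the unbounded-variation case. Two small remarks: the two cases are not literally dual (under $X\mapsto -X$ periodic reflection above becomes reflection below, so each case needs its own one-sided fluctuation identities, which is how the paper proceeds, citing \cite{MMNP} for the spectrally negative remainder and \cite{AvrPerYam}, \cite{BoWangYan} for the spectrally positive one); and the ``hard part'' you defer is resolved in the paper not by a majorant/differentiation-under-the-integral argument but by invoking the explicit closed-form scale-function representation of that killed resolvent (Proposition 5.3 of \cite{MMNP}), from which the $C^1$-regularity in $x$ is immediate.
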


	\section{Behaviour of $U_a^{(x+\varepsilon)}$ and $U_a^{(x)}$ under $\p_0$}\label{Sec3}
	We analyse the behaviour of the controlled process under a periodic-classical barrier strategy at $a \geq 0$ and 0, respectively, when the initial condition is perturbed. Throughout this section, we consider that $X$ is a \lev process started at 0, and for $y\in \R$ we write $X^{(y)}(t) := X(t) + y$.\\
	In addition, for the remainder of this section we fix a periodic-classical barrier strategy $\pi^{0,a}$, and we write its associated controlled process as $U^{(y)}$. We remark that the superscript denotes that it is driven by the \lev process $X^{(y)}$. {Similarly, we denote the cumulative amount of dividend payments by $L^{(y)}$, and the cumulative amount of capital injections by $R^{(y)}$.
	%\blue{[How about just using:  $L^{(y)}(t), R^{(y)}(t), U^{(y)}(t)$]}\red{[Ok.]} 
	
%	the corresponding dividend, bailouts and surplus processes driven by $X^{(y)}$, respectively. 
	Additionally, we write $T_a^{(y),+} := \inf \lbrace T_k \in \mathcal{T}_r : X^{(y)} (T_k) > a \rbrace$ and $\tau_0^{(y),-} := \inf\lbrace t\geq 0 : X^{(y)}(t) < 0 \rbrace$.\\
	We now state our main results in this section. 
	%\green{[Added this. Since the proof is quite long, I propose to separate Proposition 3.1 from the previous version into two results.]}
	\begin{theorem}\label{Prop_diff}
	For $x \geq 0$ and $\varepsilon > 0$ we have:
	\begin{enumerate}
		\item[(i)] $R^{(x+\varepsilon)}(t) - R^{(x)}(t)$ is non-increasing and takes values in $[-\varepsilon, 0]$.
		\item[(ii)] $U^{(x+\varepsilon)}(t) - U^{(x)}(t)$ is non-increasing and takes values in $[0,\varepsilon]$.
		\item[(iii)] $L^{(x+\varepsilon)}(t) - L^{(x)}(t)$ is non-decreasing and takes values in $[0,\varepsilon]$.
	\end{enumerate}
	\end{theorem}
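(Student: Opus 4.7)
The plan is to prove (i), (ii), (iii) jointly by induction over the successive Poisson arrival times $T_1, T_2, \ldots$ of $N_r$. The key structural observation is that between two consecutive Poisson times no dividend is paid for either starting condition, so on any such interval both $U^{(x+\varepsilon)}$ and $U^{(x)}$ evolve as classical Skorokhod reflections at $0$ of the \emph{same} Lévy increment of $X$, merely started from their respective values at the left endpoint. The inductive step on an inter-Poisson interval therefore reduces to a purely pathwise coupling statement, while the Poisson times contribute discrete modifications handled by a short case analysis.

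The pathwise ingredient I would establish first is the following reflection lemma: for any c\`{a}dl\`{a}g path $Y$ and any two levels $u_1 \geq u_2 \geq 0$, setting $\hat U^{(u_j)}(s) = u_j + Y(s) + \hat R^{(u_j)}(s)$ with $\hat R^{(u_j)}(s) = -\inf_{r\leq s}\bigl((u_j + Y(r))\wedge 0\bigr)$, the map $s \mapsto \hat U^{(u_1)}(s) - \hat U^{(u_2)}(s)$ is non-increasing with values in $[0, u_1-u_2]$, and $s \mapsto \hat R^{(u_1)}(s) - \hat R^{(u_2)}(s)$ is non-increasing with values in $[-(u_1-u_2), 0]$. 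Both facts are immediate from the Skorokhod formula by splitting on whether $\sup_{r\leq s}(-Y(r))$ lies below $u_2$, between $u_2$ and $u_1$, or above $u_1$. Jumps of $Y$ cause no difficulty because both processes receive the same jump before reflection.

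The induction then proceeds as follows. On $[0, T_1)$ both processes are in the $\eta = 0$ branch of the construction with common driver $X$, so the reflection lemma with $u_1 = x+\varepsilon$, $u_2 = x$ yields (i) and (ii), while (iii) holds trivially since $L^{(x+\varepsilon)} \equiv L^{(x)} \equiv 0$ on this interval. Assuming the three assertions at $T_i-$, at the Poisson time $T_i$ the induction hypothesis gives $\tilde{U}^{(x+\varepsilon)}(T_i) \geq \tilde{U}^{(x)}(T_i)$, so that whenever the lower process pays a dividend the higher one does too. Case work—(a) both pay, (b) only the higher pays, (c) neither pays—shows that $U^{(x+\varepsilon)}(T_i) - U^{(x)}(T_i) \in [0, U^{(x+\varepsilon)}(T_i-) - U^{(x)}(T_i-)]$, that $L^{(x+\varepsilon)} - L^{(x)}$ can only increase at $T_i$ (in case (a) the higher pays exactly $\tilde U^{(x+\varepsilon)}(T_i) - \tilde U^{(x)}(T_i) \geq 0$ more than the lower), and that $R^{(x+\varepsilon)} - R^{(x)}$ is unchanged at $T_i$ because classical reflection does not occur at Poisson times a.s. Applying the reflection lemma on $(T_i, T_{i+1})$ with updated initial values $U^{(x+\varepsilon)}(T_i) \geq U^{(x)}(T_i)$ closes the inductive step.

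Finally, the quantitative ranges in (i) and (iii) follow from (ii) together with the pathwise identity $U^{(x+\varepsilon)}(t) - U^{(x)}(t) = \varepsilon - (L^{(x+\varepsilon)}(t) - L^{(x)}(t)) + (R^{(x+\varepsilon)}(t) - R^{(x)}(t))$. Indeed, once the signs $L^{(x+\varepsilon)} \geq L^{(x)}$ and $R^{(x+\varepsilon)} \leq R^{(x)}$ are in hand, this identity rewrites as $(L^{(x+\varepsilon)}(t) - L^{(x)}(t)) + |R^{(x+\varepsilon)}(t) - R^{(x)}(t)| = \varepsilon - (U^{(x+\varepsilon)}(t) - U^{(x)}(t)) \in [0, \varepsilon]$, yielding both $0 \leq L^{(x+\varepsilon)} - L^{(x)} \leq \varepsilon$ and $-\varepsilon \leq R^{(x+\varepsilon)} - R^{(x)} \leq 0$. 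The main technical obstacle I anticipate is the bookkeeping in case (b): after $T_i$ the two processes lie in different branches of the recursive construction (the higher has just restarted with $\eta = T_i$ at level $a$, while the lower still has its earlier $\eta$), but the reflection lemma applies to the next inter-Poisson interval without modification because both processes are, on that interval, classical Skorokhod reflections of the common increment of $X$ started from their current levels $U^{(x+\varepsilon)}(T_i) = a$ and $U^{(x)}(T_i) \leq a$.
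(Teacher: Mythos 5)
Your proposal is correct and follows essentially the same route as the paper: induction over the Poisson arrival times, a pathwise coupling of the two classically reflected paths between arrivals, a three-way case analysis at each arrival time, and the accounting identity $\bigl(L^{(x+\varepsilon)}(t)-L^{(x)}(t)\bigr)-\bigl(R^{(x+\varepsilon)}(t)-R^{(x)}(t)\bigr)=\varepsilon-\bigl(U^{(x+\varepsilon)}(t)-U^{(x)}(t)\bigr)$ for the quantitative ranges. The only difference is that you prove the reflection-coupling lemma directly from the Skorokhod formula, whereas the paper imports the corresponding statement from the proof of Lemma 7 in \cite{NobYam2020}.
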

	\begin{proof}
		%\yellow{[Corrected the comments from the previous version.]\\}
		{Fix $\varepsilon>0$.} We define
		\[
		\zeta(k) := U^{(x+\varepsilon)}(T_k) - U^{(x)}(T_k),\qquad k \in \N \cup \{0\}.
		\]
		We will prove items (i)--(iii) by induction. %\green{[When using induction, is it okay not to explain that (i), (ii), and (iii) are correct at time 0? Also, it seems necessary to consider $T_0=0$.]}\yellow{[Added an explanation of the behavior at time 0.]} 
		%\green{[I have rearranged the order of the sentences.]}
		First, we note that $T_0 = 0$, also we note that at time 0, items (i)--(iii) hold trivially since $U^{(x+\varepsilon)}(0) - U^{(x)}(0) = \varepsilon$, and by definition we have $R^{(x+\varepsilon)}(0) - R^{(x)}(0) = 0$ and $L^{(x+\varepsilon)}(0) - L^{(x)}(0) = 0$. %\green{[This explanation is incomplete. For example, if $X^{(x+\varepsilon)}_0\leq 0$ holds, then $U^{(x+\varepsilon)}(0) - U^{(x)}(0) = 0$ follows.]}.
		Fix $k \in \N \cup \{0\}$, and assume that (i)--(iii) hold true on the time interval $[0,T_k]$. We will then prove that the claim is true on the interval $(T_k, T_{k+1}]$.
		\\
		We start with the proof of (i). By the definition of the periodic-classical barrier strategy, on the interval $(T_k, T_{k+1})$, the processes $U^{(x+\varepsilon)}$ and $R^{(x+\varepsilon)}$ (resp. $U^{(x)}$ and $R^{(x)}$) correspond to the \lev process with classical reflection below at $0$ and the cumulative amount of capital injections, driven by $X^{(U^{(x+\varepsilon)}(T_{k}))}$ (resp. $X^{(U^{(x)}(T_{k}))}$).
%		 By the definition of the periodic-classical barrier strategy, on the interval $(T_k, T_{k+1})$ the processes $U^{(x+\varepsilon)}$ and $R^{(x+\varepsilon)}$ (resp. $U^{(x)}$ and $R^{(x)}$) are the \lev process with classical reflection below at 0 driven by $X^{(x+\varepsilon)}$ (resp. $X^{(x)}$).\blue{[I guess this last sentence is not true. It should be: "By the definition of the periodic-classical barrier strategy, on the interval $(T_k, T_{k+1})$, the processes $U^{(x+\varepsilon)}$ and $R^{(x+\varepsilon)}$ (resp. $U^{(x)}$ and $R^{(x)}$) correspond to the \lev process with classical reflection below at $0$ and the cumulative amount of capital injections, driven by $X^{(U^{(x+\varepsilon)}(T_{k}))}$ (resp. $X^{(U^{(x)}(T_{k}))}$)"].}
%		\green{[I agree.]}
		
		Due to this observation, it is a consequence of the proof of Lemma 7 in \cite{NobYam2020} that $t \mapsto R^{(x+\varepsilon)}(t) - R^{(x)}(t)$ is non-increasing on $[T_k, T_{k+1})$ and takes values in $[-\zeta(k),0] \subseteq [-\varepsilon,0]$. To conclude the argument, since the underlying \lev process is independent of the Poisson arrival times, then the probability that $X$ has a negative jump exactly at time $T_{k+1}$ is equal to 0. This implies that $\Delta R^{(x+\varepsilon)}(T_{k+1}) = 0$ and $\Delta R^{(x)}(T_{k+1}) = 0$ with probability 1, so that $$R^{(x+\varepsilon)}(T_{k+1}) - R^{(x)}(T_{k+1})=R^{(x+\varepsilon)}(T_{k+1}-) - R^{(x)}(T_{k+1}-),$$ thus (i) holds true on $[T_k, T_{k+1}]$.
		
		For the proof of item (ii), we recall that on $[T_k,T_{k+1})$ only capital injections are performed, while dividend payments take place only at time $T_{k+1}$. It is a consequence of the proof of Lemma 7 in \cite{NobYam2020}, and item (i) that: 
		\begin{equation}\label{301}
			U^{(x+\varepsilon)}(t) - U^{(x)}(t) \in [0, \zeta(k)] \subseteq [0,\varepsilon] \text{ and is non-increasing for } t \in [T_k, T_{k+1}).
		\end{equation}
		To conclude the analysis of item (ii), we study the behaviour of $ U^{(x+\varepsilon)} - U^{(x)} $ at time $T_{k+1}$ regarding the possible instances of dividend payments. We define the sets:
		\begin{equation}\label{A_delta_set}
			A^{(\delta)} := \lbrace j\geq 1 : \Delta L^{(x+\delta)}(T_{j}) > 0 \rbrace, \quad \delta \in \lbrace 0,\varepsilon \rbrace.
		\end{equation}
		Since $U^{(x+\varepsilon)}(T_{k+1}-) \geq  U^{(x)}(T_{k+1}- ) $, we have the following relation:
		\begin{equation}\label{set1}
			\lbrace k+1 \in A^{(0)} \rbrace \subset \lbrace k+1\in A^{(\varepsilon)} \rbrace.%,\qquad \text{\blue{for $k\in\mathbb{N}$.}}
		\end{equation}
	%	\green{[I guess here we want to think about the behaviour at time $T_{k+1}$. Thus, here $k+1$ and $k\in \N\cup\{0\}$ looks better.]}
		Based on this relation, we distinguish the following cases regarding dividend payments.
		\begin{itemize}
			\item[(A)] If $k+1 \in A^{(0)}$, then due to \eqref{set1} both $U^{(x)} \text{ and } U^{(x+\varepsilon)}$ have dividend payments at time $T_{k+1}$. As a consequence, $U^{(x)}(T_{k+1} ) = a$ and $U^{(x+\varepsilon)}(T_{k+1} ) = a$, hence $$\zeta(k+1)=0. $$
			\item[(B)] If $k+1 \notin A^{(0)}$, then we have $U^{(x)}(T_{k+1}-)\leq a$, hence $\Delta U^{(x)}(T_{k+1}) = 0$. In addition, we have:
			\begin{enumerate}
				\item If $k+1\notin A^{(\varepsilon)}$, then $ U^{(x+\varepsilon)}(T_{k+1}-)\leq a $ as well, hence $\Delta U^{(x+\varepsilon)}(T_{k+1}) = 0$. It follows from \eqref{301} that
				\begin{align*}
				\zeta(k+1) = U^{(x+\varepsilon)}(T_{k+1}-) - U^{(x)}(T_{k+1}-) \in [0,\zeta(k)].
				%&\blue{= U^{(x+\varepsilon)}(T_{k}-) - U^{(x)}(T_{k}-) \in [0,\zeta(k)]. }
				\end{align*}
				%\green{[I guess the second identity is not correct because $t\mapsto U^{(x+\varepsilon)}(t-) - U^{(x)}(t-)$ is non-increasing.]}
				\item If $k+1\in A^{(\varepsilon)}$, then $ U^{(x+\varepsilon)}(T_{k+1}-) > a $. Consequently, we have $\Delta U^{(x+\varepsilon)}(T_{k+1}) = a - U^{(x+\varepsilon)}(T_{k+1}-)$, and also 
				\begin{align*}
					0\leq\zeta(k+1) &= a - U^{(x)}(T_{k+1}-) \\
					& < U^{(x+\varepsilon)}(T_{k+1}-) - U^{(x)}(T_{k+1}-) \\
					& \leq \zeta(k),
				\end{align*}
				where in the second inequality we have used \eqref{301}.
			\end{enumerate}
		\end{itemize}
		Summing up, we conclude that claim (ii) holds true on $[T_k, T_{k+1}]$.
		
		Now, we prove item (iii). First, due to the definition of the periodic-classical barrier strategy, there are no dividend payment opportunities on $(T_{k}, T_{k+1})$, hence $t \mapsto L^{(x+\varepsilon)}(t) - L^{(x)}(t)$ is constant on $(T_{k}, T_{k+1})$. We now study the behaviour of $L^{(x+\varepsilon)} - L^{(x)}$ at time $T_{k+1}$. Recall the sets $A^{(\delta)}$ as defined in \eqref{A_delta_set}, as well as the relation \eqref{set1}. We observe the following cases regarding the possible instances of dividend payments upon time $T_{k+1}$.
		\begin{itemize}
			\item[(A)] If $k+1 \in A^{(0)}$, then due to \eqref{set1} both $U^{(x)} \text{ and } U^{(x+\varepsilon)}$ have dividend payments at time $T_{k+1}$. 
			In addition, because there are no capital injections at time $T_{k+1}$, $$\Delta (L^{(x+\varepsilon)}-L^{(x)})(T_{k+1}) = U^{(x+\varepsilon)}(T_{k+1}-) - U^{(x)}(T_{k+1}-) \geq 0,$$ where the inequality follows directly from \eqref{301}. 
			\item[(B)] If $k+1 \notin A^{(0)}$, then we have $U^{(x)}(T_{k+1}-)\leq a$, hence $\Delta L^{(x)}(T_{k+1}) = 0$. In addition, we consider the following subcases:
			%\green{[Here, we want to get $\Delta (L^{(x+\varepsilon)}-L^{(x)})(T_{k+1}) \geq 0$. Since $\Delta L^{(x+\varepsilon)}(T_{k+1}) \geq 0$, it is obvious by  $\Delta L^{(x)}(T_{k+1}) = 0$ and is it not necessary to separate the two cases as follows?			]}
			\begin{enumerate}
				\item If $k+1\notin A^{(\varepsilon)}$, then $\Delta L^{(x+\varepsilon)}(T_{k+1}) = 0$ as well. Then it follows easily that $$\Delta (L^{(x+\varepsilon)}-L^{(x)})(T_{k+1}) = 0.$$

				\item If $k+1 \in A^{(\varepsilon)}$, then $U^{(x+\varepsilon)}(T_{k+1}-) \geq a$. Consequently, we have $\Delta L^{(x+\varepsilon)}(T_{k+1}) = U^{(x+\varepsilon)}(T_{k+1}-) - a \geq 0$, hence $$\Delta (L^{(x+\varepsilon)}-L^{(x)})(T_{k+1}) \geq 0.$$
			\end{enumerate}
		\end{itemize}
		To sum up, from cases (A) and (B) we conclude that $L^{(x+\varepsilon)} - L^{(x)}$ is non-negative and non-decreasing on $[T_k, T_{k+1}]$.\\
	To conclude our induction argument, it remains to prove that $L^{(x+\varepsilon)} - L^{(x)}$ takes values on $[0,\varepsilon]$. To show this, using \eqref{a001} we obtain for $t \in [T_k, T_{k+1}]$
	\begin{align}
		\left(L^{(x+\varepsilon)}(t) - L^{(x)}(t) \right)-\left(R^{(x+\varepsilon)}(t) - R^{(x)}(t) \right)=&
		\left(X^{(x+\varepsilon)}(t) - X^{(x)}(t) \right)-\left(U^{(x+\varepsilon)}(t) - U^{(x)}(t) \right)
		\\
		=&\varepsilon- \left(U^{(x+\varepsilon)}(t) - U^{(x)}(t) \right)\\ \leq & \varepsilon, \label{302}
	\end{align}
	where the inequality follows from item (ii).\\
	Recall that we have already established that $L^{(x+\varepsilon)} - L^{(x)}$ takes non-negative values. For the upper bound, from item (i) we have that $- \left( R^{(x+\varepsilon)} - R^{(x)} \right)$ takes values on $[0,\varepsilon]$, hence from \eqref{302} we have for $t \in [T_k, T_{k+1}]$
	\begin{align*}
		\left(L^{(x+\varepsilon)}(t) - L^{(x)}(t) \right) &\leq \varepsilon + \left(R^{(x+\varepsilon)}(t) - R^{(x)}(t) \right) \\
		& \leq \varepsilon.
	\end{align*}
	Thus, the induction argument is complete, and we conclude that items (i)--(iii) hold true on $[0,\infty)$.
%	Since the processes $L^{(x+\varepsilon)} - L^{(x)}$ and $- \left( R^{(x+\varepsilon)} - R^{(x)} \right)$ take non-negative values, then due to item (i) we have that $- \left( R^{(x+\varepsilon)} - R^{(x)} \right)$ takes values in $[0,\varepsilon]$. In addition, thanks to \eqref{301} and inequality \eqref{302} they are bounded above by $\varepsilon$. 
	\end{proof}
	\begin{lemma}
	%\green{[It would be better to tidy up the sentence.]}
	For $x \geq 0$ and $\varepsilon > 0$, on the event $\{ \tau_0^{(x),-} < T_a^{(x),+} \}$, we have the following for the capital injections:
	\begin{align}
		\inf\{t>0: R^{(x+\varepsilon)}(t) - R^{(x)}(t) < 0\} &\geq \tau^{(x),-}_0.
		\label{aa02}
	\end{align}
%	\green{[It's correct, but I feel $\{ \tau_0^{(x),-} < T_a^{(x),+} \}$ does not relate to \eqref{aa02}. \eqref{aa02} looks correct for any cases.]}
	On the other hand, on the event $\{\tau_0^{(x+\varepsilon),-}<T_a^{(x+\varepsilon),+}\}$, we have:
	\begin{equation}\label{diff_R_2}
		L^{(x+\varepsilon)}(t) - L^{(x)}(t) = 0\quad t\in[0, \infty) ,
		\quad R^{(x+\varepsilon)}(t) - R^{(x)}(t)=\begin{cases}
			0, \quad t \in [0, \tau_0^{(x),-}) \\
			-\varepsilon, \quad t \in [\tau_0^{(x+\varepsilon),-},\infty).
		\end{cases}
	\end{equation}
	Regarding the dividend payments, on the event $\{ T_a^{(x+\varepsilon),+} < \tau_0^{(x+\varepsilon),-} \}$, we have:
			\begin{align}
		\inf\{t>0: L^{(x+\varepsilon)}(t) - L^{(x)}(t) > 0\} &\geq T^{(x+\varepsilon),+}_a.
		\label{aa01}
	\end{align}
	On the other hand, on the event $\{T_a^{(x),+}<\tau_0^{(x),-}\}$, the following holds:
	\begin{equation}\label{diff_L_2}
		L^{(x+\varepsilon)}(t) - L^{(x)}(t) =  \begin{cases}
			0, \quad t \in [0, T_a^{(x+\varepsilon),+}) \\
			\varepsilon, \quad t \in [T_a^{(x),+},\infty)
		\end{cases} ,
		\quad R^{(x+\varepsilon)}(t) - R^{(x)}(t)=0, \quad t\in [0, \infty).
	\end{equation}
		%In particular, {in the event} $\{\tau_0^{(x+\varepsilon),-}<T_a^{(x+\varepsilon),+}\}$ we have, 
	%and \green{the process $L_a^{(x+\varepsilon)} - L_a^{(x)}$ begins to increase after time $T_a^{(x+\varepsilon),+}$}. 
%On the other hand, on $\{T_a^{(x),+}<\tau_0^{(x),-}\}$ we have,
	\end{lemma}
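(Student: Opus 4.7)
The plan is to address the four claims sequentially, using Theorem~\ref{Prop_diff} together with pathwise analysis of the controlled processes before any payments occur. For \eqref{aa02}, on $\{\tau_0^{(x),-}<T_a^{(x),+}\}$ the path of $X^{(x)}$ stays non-negative on $[0,\tau_0^{(x),-})$ and never exceeds $a$ at a Poisson time there, so $U^{(x)}=X^{(x)}$ and $R^{(x)}\equiv 0$ on this interval; Theorem~\ref{Prop_diff}(i) then forces $R^{(x+\varepsilon)}(t)\le R^{(x)}(t)=0$, and monotonicity of $R^{(x+\varepsilon)}$ (starting from $0$) gives $R^{(x+\varepsilon)}=0$ as well, which is strictly stronger than what is claimed. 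Statement \eqref{aa01} follows by the symmetric argument applied on $\{T_a^{(x+\varepsilon),+}<\tau_0^{(x+\varepsilon),-}\}$: one has $L^{(x+\varepsilon)}\equiv 0$ on $[0,T_a^{(x+\varepsilon),+})$, so Theorem~\ref{Prop_diff}(iii) and $L^{(x)}\ge 0$ force $L^{(x)}=0$ there.

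For \eqref{diff_R_2} I would first record the deterministic orderings $\tau_0^{(x),-}\le \tau_0^{(x+\varepsilon),-}$ and $T_a^{(x+\varepsilon),+}\le T_a^{(x),+}$, both immediate from $X^{(x+\varepsilon)}=X^{(x)}+\varepsilon$; thus on $\{\tau_0^{(x+\varepsilon),-}<T_a^{(x+\varepsilon),+}\}$ we also lie on $\{\tau_0^{(x),-}<T_a^{(x),+}\}$, and \eqref{aa02} already handles the interval $[0,\tau_0^{(x),-})$. The key pathwise observation is that at time $\tau_0^{(x+\varepsilon),-}$ the path of $X^{(x+\varepsilon)}$ reaches a new running infimum strictly below $0$, while the running infimum of $X^{(x)}$ over $[0,\tau_0^{(x+\varepsilon),-})$ is at least $-\varepsilon$; applying the Skorokhod map to both classically reflected processes yields $U^{(x)}(\tau_0^{(x+\varepsilon),-})=U^{(x+\varepsilon)}(\tau_0^{(x+\varepsilon),-})=0$ together with $R^{(x+\varepsilon)}(\tau_0^{(x+\varepsilon),-})-R^{(x)}(\tau_0^{(x+\varepsilon),-})=-\varepsilon$. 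Since the two controlled processes now coincide at $0$ and obey the same periodic-classical dynamics driven by the same $X$ and the same Poisson grid, pathwise uniqueness forces $U^{(x+\varepsilon)}(t)=U^{(x)}(t)$ for every $t\ge\tau_0^{(x+\varepsilon),-}$; the decomposition \eqref{a001} then propagates $L^{(x+\varepsilon)}-L^{(x)}\equiv 0$ and $R^{(x+\varepsilon)}-R^{(x)}\equiv -\varepsilon$ from $\tau_0^{(x+\varepsilon),-}$ onward.

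The proof of \eqref{diff_L_2} follows the same template, with the coupling now taking place at $T_a^{(x),+}$; the chain $T_a^{(x+\varepsilon),+}\le T_a^{(x),+}<\tau_0^{(x+\varepsilon),-}$ combined with \eqref{aa01} gives the stated identity on $[0,T_a^{(x+\varepsilon),+})$. The delicate step is the bridging interval $[T_a^{(x+\varepsilon),+},T_a^{(x),+})$, where $U^{(x+\varepsilon)}$ has been reset to $a$ and may still pay further dividends; I would write its ``free'' trajectory as $a+X^{(x)}(t)-X^{(x)}(T_a^{(x+\varepsilon),+})$ (after the $\varepsilon$ offset cancels) and use $X^{(x)}\ge 0$ on $[0,T_a^{(x),+})$ together with $X^{(x)}(T_a^{(x+\varepsilon),+})\le a$ to show this trajectory stays non-negative, so no capital injection accrues and $R^{(x+\varepsilon)}(T_a^{(x),+})=R^{(x)}(T_a^{(x),+})=0$. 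At $T_a^{(x),+}$, Theorem~\ref{Prop_diff}(ii) applied to $U^{(x)}(T_a^{(x),+}-)=X^{(x)}(T_a^{(x),+})>a$ forces $U^{(x+\varepsilon)}(T_a^{(x),+}-)>a$, so both processes pay dividends down to $a$; \eqref{a001} evaluated at $T_a^{(x),+}$ then yields $L^{(x+\varepsilon)}(T_a^{(x),+})-L^{(x)}(T_a^{(x),+})=\varepsilon$, and the coupling-by-pathwise-uniqueness argument propagates these identities to all $t\ge T_a^{(x),+}$.

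The main obstacle is precisely this pathwise bookkeeping on the bridging interval for \eqref{diff_L_2}: one must simultaneously rule out capital injections of $U^{(x+\varepsilon)}$ despite possibly several intermediate dividend payments, where the naive bound $X^{(x+\varepsilon)}\ge \varepsilon$ is much too weak to control the lower excursions of $U^{(x+\varepsilon)}$ after its level drops from above $a$ down to $a$. The key trick is to re-express $U^{(x+\varepsilon)}$ in terms of $X^{(x)}$ rather than $X^{(x+\varepsilon)}$, at which point the bound $X^{(x)}\ge 0$ guaranteed by the event translates directly into the required non-negativity, and everything downstream falls into place.
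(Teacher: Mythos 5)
Your proposal is correct and follows essentially the same architecture as the paper's proof: establish the vanishing of the relevant quantities on the initial interval from the explicit form of the uncontrolled path together with Theorem \ref{Prop_diff} and non-negativity, evaluate the decomposition \eqref{a001} at the crossing time to get the jump to $-\varepsilon$ (resp. $\varepsilon$), and then propagate forward. The one place where your mechanism differs is the propagation step: you invoke pathwise uniqueness of the periodic-classical reflection flow once the two controlled processes have coupled at $0$ (resp. at $a$), whereas the paper simply observes that, by Theorem \ref{Prop_diff}, $R^{(x+\varepsilon)}-R^{(x)}$ is non-increasing with values in $[-\varepsilon,0]$ and $U^{(x+\varepsilon)}-U^{(x)}$ is non-increasing with values in $[0,\varepsilon]$, so once either difference reaches its extreme value it is trapped there; the identities for the remaining process then follow from \eqref{a001}. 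Both work, but the paper's route avoids having to formalize a flow property for the recursive construction of $U_r^{0,a}$. Likewise, your careful bookkeeping on the bridging interval $[T_a^{(x+\varepsilon),+},T_a^{(x),+})$ is more elaborate than necessary: the paper disposes of possible capital injections of $U^{(x+\varepsilon)}$ there in one line, since $R^{(x)}\equiv 0$ on $[0,T_a^{(x),+})$, Theorem \ref{Prop_diff}(i) gives $R^{(x+\varepsilon)}\leq R^{(x)}$, and $R^{(x+\varepsilon)}\geq 0$.
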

%	\green{[I feel \eqref{aa02} and \eqref{aa01} without the condition $\{ \tau_0^{(x),-} < T_a^{(x),+} \}$ and $\{ T_a^{(x+\varepsilon),+} < \tau_0^{(x+\varepsilon),-} \}$.]}
	\begin{proof}
%		For the proof of \eqref{aa02}, suppose without loss of generality that there exists $K\geq 0$ such that $\tau_0^{(x),-} \in [T_K,T_{K+1})$. Then, for each $k \in \lbrace 0,1,\cdots,K-1 \rbrace$, on the interval $[T_k,T_{k+1})$ the processes $U^{(x+\varepsilon)}-U^{(x)}$ and $R^{(x+\varepsilon)} - R^{(x)}$ follow the behaviour described in (3.24) in \cite{NobYam2020}. This means that $ U^{(x+\varepsilon)}-U^{(x)} \equiv \zeta(k) $ and $R^{(x+\varepsilon)} - R^{(x)} \equiv 0$ on $ [T_k,T_{k+1}) \subset [0, \tau_0^{(x),-})$.\\
%		In addition, on the interval $[T_K,\tau_0^{(x),-})$ we also observe that $U^{(x+\varepsilon)}-U^{(x)}$ and $R^{(x+\varepsilon)} - R^{(x)}$ follow the behaviour described in (3.24) in \cite{NobYam2020}; whereas on the interval $[\tau_0^{(x),-},T_{K+1})$ we observe the behaviour from (3.25) in \cite{NobYam2020}, namely that $R^{(x)}(t) $ takes values in $[0, \zeta(k)]$ while $ R^{(x+\varepsilon)}(t)\geq 0 $ for $t \in [\tau_0^{(x),-}, T_{K+1} )$. Hence \eqref{aa02} holds.\\
%\yellow{[Thanks for proposing the modifications, I think the proof is clearer and more compact. I cleaned up the comments from the previous version.]\\}
		For the proof of \eqref{aa02}, on $\{ \tau_0^{(x),-} < T_a^{(x),+} \}$, we have $U^{(x)}(t) = X^{(x)}(t)$ and $R^{(x)}(t) = 0$ for all $t \in [0,\tau_0^{(x),-})$. Moreover,
	\begin{align}
&\label{eq001} \tau^{(x),-}_0= \inf\{t\geq 0 : R^{(x)}(t)>0\} ,  
\end{align} 
		 by the construction of periodic-classical barrier strategy. Since $R^{(x+\varepsilon)}$ is non-negative and by \eqref{eq001}, we have \eqref{aa02}.
Similarly, for the proof of \eqref{aa01}, on $\{  T_a^{(x+\varepsilon),+}<\tau_0^{(x+\varepsilon),-}\}$, we have $U^{(x+\varepsilon)}(t) = X^{(x+\varepsilon)}(t)$ and $L^{(x+\varepsilon)}(t) = 0$ for all $t \in [0,T_a^{(x+\varepsilon),+})$. Moreover,  
\begin{align}
T_a^{(x+\varepsilon),+}=\inf\{t\geq 0:L^{(x+\varepsilon)}(t)>0\},
\label{eq002}
\end{align}
by the construction of periodic-classical barrier strategy. 
Since $L^{(x)}$ is non-negative and by \eqref{eq002}, we obtain \eqref{aa01}.
%}\blue{[Agreed]}

	%	We proceed to the proof of \eqref{diff_R_2}. Note that on $\{\tau_0^{(x+\varepsilon),-} < T_a^{(x+\varepsilon),+}\}$, and due to \eqref{aa01} we have $ L^{(x+\varepsilon)} - L^{(x)} \equiv 0 $ on $[ 0, T_a^{(x+\varepsilon),+} ]\green{)?}$\green{[\eqref{aa01} is not proved under $\{\tau_0^{(x+\varepsilon),-} < T_a^{(x+\varepsilon),+}\}$. This point is correct?]}. By following a similar reasoning as in (3.24)--(3.28) in \cite{NobYam2020} we conclude that 
	%	\[ R^{(x+\varepsilon)} - R^{(x)} \equiv -\varepsilon \text{ and } U^{(x+\varepsilon)} - U^{(x)} \equiv 0 \text{ on } [ \tau_0^{(x+\varepsilon),-} ,\infty) .\]
	%	\blue{Additionally, given that 
	%	\[
	%	\tau_0^{(x),-}\leq \tau_0^{(x+\varepsilon),-}\leq T_a^{(x+\varepsilon),+}\leq T_a^{(x),+},\]
	%	it follows that there are no dividend payments or capital injections on the time interval $[0,\tau_0^{(x)-})$. Hence, $R^{(x)}_t=0$ in $[0,\tau_0^{(x)-})$.}
%\green{[I think the first part is incomplete. I have rewritten the proof. ]
We proceed to the proof of \eqref{diff_R_2}. 
The first identity for $R^{(x+\varepsilon)}(t)-R^{(x)}(t)$ in \eqref{diff_R_2} follows from \eqref{aa02} and the fact that $\{\tau_0^{(x+\varepsilon),-}<T_a^{(x+\varepsilon),+}\}\subset\{\tau_0^{(x),-}<T_a^{(x),+}\}${, which can be deduced by applying Theorem \ref{Prop_diff} together with \eqref{eq001} and \eqref{eq002} (for $x$ and $x+\varepsilon$)}. 
By the construction of the periodic-classical barrier strategy, on $\{\tau_0^{(x+\varepsilon),-} < T_a^{(x+\varepsilon),+}\}$, we have
\[ 
U^{(x+\varepsilon)}(t) = X^{(x+\varepsilon)}(t),\, L^{(x+\varepsilon)}(t) = 0 \text{ and } R^{(x+\varepsilon)}(t) = 0, \quad \text{for } t \in [0,\tau_0^{(x+\varepsilon),-}),\]
as well as $U^{(x+\varepsilon)}\left(\tau^{(x+\varepsilon),-}_0\right)=0$. 
By Theorem \ref{Prop_diff}(ii) and (iii), and the non-negativity of $L^{(x)}$ and $ U^{(x)} $, we also have 
\[
L^{(x)}(t)=0\qquad \text{for $t \in [0,\tau_0^{(x+\varepsilon),-})$,} 
\]
and $U^{(x)}\left(\tau_0^{(x+\varepsilon),-}\right)=0$. Therefore,
\[ 
L^{(x+\varepsilon)}(t)-L^{(x)}(t)=0\qquad\text{for $t \in [0,\tau_0^{(x+\varepsilon),-})$}.
\] 
Furthermore, by \eqref{a001}, we have
\begin{align}
	&R^{(x+\varepsilon)}(\tau_0^{(x+\varepsilon),-}) - R^{(x)}(\tau_0^{(x+\varepsilon),-}) \\
	&=\left(U^{(x+\varepsilon)}(\tau_0^{(x+\varepsilon),-}) - U^{(x)}(\tau_0^{(x+\varepsilon),-}) \right)+
		\left(L^{(x+\varepsilon)}(\tau_0^{(x+\varepsilon),-}) - L^{(x)}(\tau_0^{(x+\varepsilon),-}) \right)\\
&\quad		-\left(X^{(x+\varepsilon)}(\tau_0^{(x+\varepsilon),-}) - X^{(x)}(\tau_0^{(x+\varepsilon),-}) \right)
		\\
		&=0+0-\varepsilon=-\varepsilon.
	\end{align}
	Therefore, combining the above with Theorem \ref{Prop_diff}(i) and (ii), we obtain the second identity for $R^{(x+\varepsilon)}(t)-R^{(x)}(t)$ in \eqref{diff_R_2}, as well as $U^{(x+\varepsilon)}(t)-U^{(x)}(t)=0$ for $t\geq \tau_0^{(x+\varepsilon),-}$. 
Then, by \eqref{a001}, we have for $t\geq \tau_0^{(x+\varepsilon),-}$, 
\begin{align}
	L^{(x+\varepsilon)}(t) - L^{(x)}(t) 
	&=\left(X^{(x+\varepsilon)}(t) - X^{(x)}(t) \right)+
		\left(R^{(x+\varepsilon)}(t) - R^{(x)}(t) \right)\\
&\quad		-\left(U^{(x+\varepsilon)}(t) - U^{(x)}(t) \right)
		\\
		&=\varepsilon-\varepsilon-0=0.
	\end{align}
	Thus, the proof of \eqref{diff_R_2} is complete. 
We proceed to the proof of \eqref{diff_L_2}. 
The first identity for $L^{(x+\varepsilon)}(t)-L^{(x)}(t)$ in \eqref{diff_L_2} follows from \eqref{aa01} and the fact that $\{ T_a^{(x),+} < \tau_0^{(x),-} \}\subset\{ T_a^{(x+\varepsilon),+} < \tau_0^{(x+\varepsilon),-} \}${, which can be obtained by applying Theorem \ref{Prop_diff} together with \eqref{eq001} and \eqref{eq002} (for $x$ and $x+\varepsilon$).} 
By the construction of the periodic-classical barrier strategy, we have, on $\{ T_a^{(x),+} < \tau_0^{(x),-} \}$, 
\[
U^{(x)}(t) = X^{(x)}(t), \ L^{(x)}(t) = 0, \ \text{and} \ R^{(x)}(t) = 0, \ \text{for all $t \in [0,T_a^{(x),+})$},
\]
with $U^{(x)}(T_a^{(x),+})=a$. 

By Theorem \ref{Prop_diff}(i), \eqref{set1} and the non-negativity of $R^{(x)}$, we also have 
\[
R^{(x+\varepsilon)}(t)=0,\qquad\text{for $t \in [0,T_a^{(x),+})$,} 
\]
and $U^{(x+\varepsilon)}(T_a^{(x),+})=a$. 
Thus, $R^{(x+\varepsilon)}(t)-R^{(x)}(t)=0$ for $t \in [0,T_a^{(x),+})$. 

Furthermore, by \eqref{a001}, we have
\begin{align}
	L^{(x+\varepsilon)}(T_a^{(x),+}) - L^{(x)}(T_a^{(x),+}) 
	&=\left(X^{(x+\varepsilon)}(T_a^{(x),+}) - X^{(x)}(T_a^{(x),+}) \right)+
		\left(R^{(x+\varepsilon)}(T_a^{(x),+}) - R^{(x)}(T_a^{(x),+}) \right)\\
&\quad		-\left(U^{(x+\varepsilon)}(T_a^{(x),+}) - U^{(x)}(T_a^{(x),+}) \right)
		\\
		&=\varepsilon-0-0=\varepsilon.
	\end{align}
	Therefore, combining the above with Theorem \ref{Prop_diff}(ii) and (iii), we obtain the second identity for $L^{(x+\varepsilon)}(t)-L^{(x)}(t)$ in \eqref{diff_L_2} as well as $U^{(x+\varepsilon)}(t)-U^{(x)}(t)=0$ for $t\geq T_a^{(x),+}$. 
Hence, by \eqref{a001}, we have for $t\geq T_a^{(x),+}$, 
\begin{align}
	R^{(x+\varepsilon)}(t) - R^{(x)}(t) 
	&=\left(U^{(x+\varepsilon)}(t) - U^{(x)}(t) \right)+
		\left(L^{(x+\varepsilon)}(t) - L^{(x)}(t) \right)\\
&\quad		-\left(X^{(x+\varepsilon)}(t) - X^{(x)}(t) \right)
		\\
		&=0+\varepsilon-\varepsilon=0.
	\end{align}
Thus, the proof of \eqref{diff_L_2} is complete. 
%}
	\end{proof}
	\section{Selection of a candidate optimal threshold}\label{sec:candidate}
		%\yellow{[Rewrote this paragraph, since in Section \ref{Sec203} we now introduced the proper definition of $T_a^+$. I have aslo cleaned the comments in the Lemma, as the random times have been defined.]} 
		 We focus on the periodic-classical barrier strategy and we characterise our candidate optimal barrier, denoted by $a^{*}$, which is chosen to satisfy the  \textit{smooth fit} condition $v_{a^{*}}'(a^{*})=1$. {Throughout this section, we recall the first passage time below 0, denoted by $\tau_0^{-}$, and the Poissonian exit time above $a \in \R$, denoted by $T_a^+$, which were both defined in \eqref{cross_times}.}
	
	%\green{[Moved this here.]} 
	{Later in this section we will need the following auxiliary result, whose proof is omitted since it is similar to that} of Lemma 3.2 in \cite{NobYam2022}. 
	%\green{For simplicity, throughout this section, we use $T^{+}_{a}$ to refer to $\tilde{T}^+_a(1)$, which is defined in Section \ref{Sec203} for $a \in \R$.}
	\begin{lemma}\label{Lem_Converge}
		{For fixed $a \in \R$, we have $\lim_{a' \rightarrow a} T^{+}_{a'} = T^{+}_{a}$ $\p$-a.s.}
%		For fixed $a \geq 0$, we have $\lim_{a' \rightarrow a} T^{+}_{a'} = T^{+}_{a}$ \blue{[Have the random times $T_a^+$ been defined?]}\green{[In this paper, I guess we used $\tilde{T}^+_a(1)$. I added the above sentence. Is $a\geq 0$ necessary? Maybe $a\in\bR$ is enough.]} \green{$\p_x$-}almost surely \green{for any $x\in\R$ [or $\p$-a.s. is better?]}\blue{[Agreed]}. %\blue{[In my paper, we put the condition $\lbrace T^{+}_a < \infty \rbrace$. However, I think this equality is also correct when $T_a=\infty$.]}\green{[If you prefer, we can write it without conditioning on the event $\lbrace T^{+}_a < \infty \rbrace$, I think that it doesn't affect our calculations.]}.
		%\blue{[Since this is an almost obvious fact, I didn't pay much attention to it in my previous paper, but I think that the fact that $ T^{+}_{a}=\infty \Rightarrow \lim_{a' \rightarrow a} T^{+}_{a'} =\infty$ is used in all the places where this lemma is used in this paper. (In either case, we need to consider the limit under all events.)]}\green{[I removed the conditioning on the event $\lbrace T^{+}_{a} < \infty \rbrace$. As you say, the limit is true under all events. ]}
	\end{lemma}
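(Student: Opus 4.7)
The plan is to exploit the monotonicity of $a \mapsto T_a^+$ and to treat right- and left-continuity separately. The main obstacle will be left-continuity, which reduces to showing that $X$ a.s.\ does not hit the level $a$ at any Poisson arrival time $T_j$; this is where Assumption \ref{assump1a} is essential.

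\textbf{Monotonicity and one-sided limits.} First observe that since $\{X(T_i) > a\} \subseteq \{X(T_i) > a'\}$ whenever $a' \le a$, the map $a \mapsto T_a^+$ is non-decreasing. Hence the one-sided limits
\[
L := \lim_{a' \uparrow a} T_{a'}^+ \le T_a^+ \le \lim_{a' \downarrow a} T_{a'}^+ =: R
\]
exist pathwise, so it suffices to establish $L = T_a^+ = R$ almost surely.

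\textbf{Right-continuity.} If $T_a^+ = \infty$, monotonicity forces $R = \infty$. Otherwise set $\delta := X(T_a^+) - a > 0$; for every $a' \in [a, a+\delta)$, the Poisson arrival $T_a^+$ still satisfies $X(T_a^+) > a'$, so $T_{a'}^+ \le T_a^+$. Combined with monotonicity this gives $T_{a'}^+ = T_a^+$, and therefore $R = T_a^+$.

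\textbf{Left-continuity.} I would argue by contradiction on the event $\{L < T_a^+\}$: choose any rational $\beta$ with $L < \beta < T_a^+$ and a sequence $a_n \uparrow a$ such that $T_{a_n}^+ \le \beta$ eventually. Each $T_{a_n}^+ = T_{i_n}$ is one of the almost surely finitely many Poisson arrivals in $[0,\beta]$, so by the pigeonhole principle some index $j$ is attained for infinitely many $n$; for this $j$, the inequality $X(T_j) \ge a_n$ passes to the limit to give $X(T_j) \ge a$, while $T_j \le \beta < T_a^+$ forces $X(T_j) \le a$. Hence
\[
\{L < T_a^+\} \subseteq \bigcup_{j \ge 1} \{X(T_j) = a\}.
\]

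\textbf{The null event.} The hard part is verifying $\p(X(T_j) = a) = 0$ for every $j \ge 1$. Assumption \ref{assump1a} guarantees that the $r$-potential measure of $X$ is atomless, and up to normalization this is exactly the law of $X(T_1)$, which settles the case $j=1$. For $j \ge 2$, the strong Markov property applied at the $\mathbb{F}$-stopping time $T_{j-1}$, together with the memoryless property of the exponential inter-arrival time $T_j - T_{j-1}$, shows that $X(T_j) - X(T_{j-1})$ is independent of $\cF_{T_{j-1}}$ and distributed as $X(T_1)$; conditioning on $X(T_{j-1})$ then yields $\p(X(T_j) = a) = 0$. Countable additivity finishes the argument, so $L = T_a^+$ almost surely, completing the proof.
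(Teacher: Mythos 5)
Your proof is correct and uses exactly the ingredient the paper relies on: the paper omits the proof, deferring to Lemma 3.2 of \cite{NobYam2022}, but explicitly notes that Assumption \ref{assump1a} enters precisely through the atomlessness of the potential measure of $X$, which is the crux of your left-continuity step (the law of $X(T_1)$ being $r$ times the $r$-potential measure). The monotonicity/pigeonhole reduction to the null events $\{X(T_j)=a\}$ and the pathwise right-continuity argument are both sound, so your write-up supplies a complete version of the argument the paper leaves implicit.
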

{First, it is clear that we can write the NPV of a periodic-classical barrier strategy as $v_a(x) = v_a^{L}(x) - \beta v_a^{R}(x)$ for all $x \in [0,\infty)$, where
\[
v_a^{L}(x):= \E_x\left[\int_{[0,\infty)} e^{-qt} \diff L_r^{0,a}(t)\right], \quad v_a^{R}(x):= \E_x\left[\int_{[0,\infty)} e^{-qt} \diff R_r^{0,a}(t)\right].
\]
} The next result allows us to express the derivatives of $v_a^{L}$ and $v_a^{R}$ in terms of the crossing times $T_a^{+}$ and $\tau_0^{-}$.
\begin{proposition}\label{Prop401}%\green{[I corrected the notation, all of the calculations now are done for $x \in (0,\infty)$. I agree that the inequalities from the proof, and the derivatives are valid on $\R\setminus\{0\}$. I think that for the case $x=0$ is holds if we compute the right derivative, but I don't know if we actually need to compute $v_a'(0+)$.]}
%\blue{[We must confirm carefully but I guess the computation of $v_a'(0+)$ is not necessary because the range of inequalities in Lemma \ref{lem_verification} can be changed to $x>0$.]}
	Fix $a \geq 0$. Then the functions $v_a^{L}$ and $v_a^{R}$ are continuously differentiable on $(0,\infty)$, and their derivatives have the form 
	\[
	v_a^{L \prime}(x) = \E_x\left[ e^{-q T_a^{+}} ; T_a^{+} < \tau_0^{-} \right], \quad v_a^{R \prime}(x) = -\E_x \left[ e^{-q \tau_0^{-}} ; \tau_0^{-} < T_a^{+} \right], \quad x \in (0,\infty). 
	\]
\end{proposition}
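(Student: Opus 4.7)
My plan is to compute the right-hand derivatives from the difference quotients using the pathwise comparison results of Section \ref{Sec3}. By translation invariance, we can rewrite $v_a^L(x)=\E_0\bigl[\int_{[0,\infty)}e^{-qt}\,\diff L^{(x)}(t)\bigr]$ and $v_a^R(x)=\E_0\bigl[\int_{[0,\infty)}e^{-qt}\,\diff R^{(x)}(t)\bigr]$ in terms of the processes introduced in Section \ref{Sec3}. The problem then reduces to computing the limits as $\varepsilon\downarrow 0$ of $\varepsilon^{-1}\,\E_0\bigl[\int_{[0,\infty)} e^{-qt}\,\diff(L^{(x+\varepsilon)}-L^{(x)})(t)\bigr]$ and $\varepsilon^{-1}\,\E_0\bigl[\int_{[0,\infty)} e^{-qt}\,\diff(R^{(x+\varepsilon)}-R^{(x)})(t)\bigr]$.

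The key step is to partition the sample space into three disjoint events,
\begin{align*}
B_1:=\{T_a^{(x),+}<\tau_0^{(x),-}\},\qquad B_2:=\{\tau_0^{(x+\varepsilon),-}<T_a^{(x+\varepsilon),+}\},\qquad B_3:=(B_1\cup B_2)^c,
\end{align*}
using the inclusions recorded in the preceding lemma for the disjointness of $B_1$ and $B_2$. On $B_1$, identity \eqref{diff_L_2}, combined with Theorem \ref{Prop_diff}(iii) and the monotonicity $T_a^{(x+\varepsilon),+}\le T_a^{(x),+}$, sandwiches the $L$-integral between $\varepsilon\,e^{-qT_a^{(x),+}}$ and $\varepsilon\,e^{-qT_a^{(x+\varepsilon),+}}$, and forces the $R$-integral to vanish. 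On $B_2$ the roles reverse: \eqref{diff_R_2} kills the $L$-integral and places the $R$-integral between $-\varepsilon\,e^{-q\tau_0^{(x),-}}$ and $-\varepsilon\,e^{-q\tau_0^{(x+\varepsilon),-}}$. On $B_3$, Theorem \ref{Prop_diff}(i) and (iii) yield the uniform bound $\varepsilon$ for both integrands, so each contribution is at most $\varepsilon\,\p(B_3)$.

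To pass to the limit, I would invoke Lemma \ref{Lem_Converge} through the identities $T_a^{(x+\varepsilon),+}=T_{a-x-\varepsilon}^+$ and $\tau_0^{(x+\varepsilon),-}=\tau_{-x-\varepsilon}^-$ to obtain the a.s.\ convergence of both crossing times, together with the no-atom property of the potential measure (from Assumption \ref{assump1a}) to conclude that $\{T_a^{(x),+}=\tau_0^{(x),-}<\infty\}$ is $\p$-null, hence $\p(B_3)\to 0$. Dominated convergence then produces the stated formulas for the right-hand derivatives. A symmetric argument with $x-\varepsilon$ in place of $x+\varepsilon$ yields matching left-hand derivatives, so both $v_a^L$ and $v_a^R$ are differentiable. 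Continuity of the derivatives in $x$ follows from another dominated-convergence argument based on the same crossing-time convergences, completing the $C^1$ claim.

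The principal obstacle is controlling the transition event $B_3$: one must establish the a.s.\ continuity of the crossing times $T_a^{(\cdot),+}$ and $\tau_0^{(\cdot),-}$ in the starting point, together with the fact that the Poisson observation times (being independent of $X$) cannot a.s.\ coincide with the continuous first-passage time $\tau_0^-$. Both properties ultimately rest on Assumption \ref{assump1a} and Lemma \ref{Lem_Converge}.
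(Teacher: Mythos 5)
Your argument is correct and follows essentially the same route as the paper: the same pathwise identities \eqref{diff_L_2} and \eqref{diff_R_2} produce a two-sided sandwich of the difference quotients, and Lemma \ref{Lem_Converge} together with the a.s.\ convergence of $\tau^-_{-\varepsilon}$ passes to the limit, so your explicit three-event partition ($B_1$, $B_2$, $B_3$) is just a repackaging of the paper's upper and lower bounds \eqref{right2a}--\eqref{left1}. The only imprecision is attributing the nullity of $\{T_a^{(x),+}=\tau_0^{(x),-}<\infty\}$ to the no-atom property of the potential measure; the correct (and simpler) reason, which you also state, is the independence of the Poisson observation times from $X$.
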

\begin{proof}
%	\yellow{[Agree with your comments and modifications. I have cleaned up the comments and the proof.]\\}
	{Throughout this proof we will use that $\E \left[ e^{- q \tau_0^{(x),-}} ; \tau_0^{(x),-} < T_a^{(x),+}\right]=\E_x \left[ e^{- q \tau_0^{-}} ; \tau_0^{-} < T_a^{+}\right]$, as well as $\E \left[ e^{- q T_a^{(x),+}} ;  T_a^{(x),+} < \tau_0^{(x),-} \right] = \E_x \left[ e^{- q T_a^{+}} ;  T_a^{+} < \tau_0^{-} \right]$.\\	}
%\green{[Following the remarks and changes on Proposition \ref{Prop_diff}, I made some changes to the proof.]}
	\textit{Step (i).-} We compute the right- and left-derivatives of $v_a^{L}$. {Using} the notation {introduced in} Section \ref{Sec3}, for $\varepsilon > 0$ and $x \in (0,\infty)$ we rewrite $v_a^{L}(x+\varepsilon) - v_a^{L}(x) $ as
	\begin{equation}\label{v_L_diff}
	v_a^{L}(x+\varepsilon) - v_a^{L}(x) = \E \left[ \int_{{[0, \infty)}} e^{-qt} \diff (L^{(x+\varepsilon)}(t) - L^{(x)}(t)) \right].
	\end{equation}
	Hence, we obtain %\green{[Note that $q$ appears when we apply integration by parts to \eqref{v_L_diff}]}
	\begin{align}
		v_a^{L}(x+\varepsilon) - v_a^{L}(x) &= \E \left[ \int_{[0,\infty)} q e^{-qt} \left( L^{(x+\varepsilon)}(t) - L^{(x)}(t) \right)  \diff t \right] \\
		& {=} \E \left[  \int_{[T_a^{(x+\varepsilon),+}  ,\infty)} q e^{-qt} \left( L^{(x+\varepsilon)}(t) - L^{(x)}(t) \right)  \diff t ; T_a^{(x+\varepsilon),+} < \tau_0^{(x+\varepsilon),-} \right] \\
		& \leq \varepsilon \E \left[ e^{- q T_a^{(x+\varepsilon),+}} ; T_a^{(x+\varepsilon),+} < \tau_0^{(x+\varepsilon),-} \right] 
		{=\varepsilon \E_x \left[ e^{- q T_{a-\varepsilon}^{+}} ; T_{a-\varepsilon}^{+} < \tau_{-\varepsilon}^{-} \right] }, \label{right1}
	\end{align}
	 where in the first equality we used integration by parts, in the second equality we used \eqref{aa01} and the identity concerning $L^{(x+\varepsilon)}-L^{(x)}$ in \eqref{diff_R_2}, and in the first inequality we used Theorem \ref{Prop_diff}(iii).\\
	On the other hand, we have
	\begin{align}
		v_a^{L}(x+\varepsilon) - v_a^{L}(x) &= \E \left[ \int_{[0,\infty)} q e^{-qt} \left( L^{(x+\varepsilon)}(t) - L^{(x)}(t) \right)  \diff t \right] \\
		&\geq\E \left[  \int_{[T_a^{(x),+},\infty)} q e^{-qt} \left( L^{(x+\varepsilon)}(t) - L^{(x)}(t) \right)  \diff t ; T_a^{(x),+} < \tau_0^{(x),-} \right] \\
		& = \varepsilon \E \left[ e^{- q T_a^{(x),+}} ; T_a^{(x),+} < \tau_0^{(x),-} \right]
		=\varepsilon \E_x \left[ e^{- q T_a^{+}} ; T_a^{+} < \tau_0^{-} \right]
		. \label{right2}
	\end{align}
	%[I guess ``$=$'' isn't correct because on $\{T_a^{(x+\varepsilon),+} < \tau_0^{(x+\varepsilon),-} \}$, $L^{(x+\varepsilon)}(t) - L^{(x)}(t) $ does not increase, but on $\{T_a^{(x),+} < \tau_0^{(x),-}\}$, $L^{(x+\varepsilon)}(t) - L^{(x)}(t) $ may increase.]
		where in the first equality we used integration by parts and in the second {equality} we used the identity concerning $L^{(x+\varepsilon)}-L^{(x)}$ in \eqref{diff_L_2}. \\
	By combining \eqref{right1} and \eqref{right2}, we get the following bounds for $x\in(0,\infty)$,%\green{[Note that I guess it is true for $x \geq 0$, but it is enough for thinking $x\in(0,\infty)$,]}
	\begin{equation}\label{right2a}
	%\varepsilon \E \left[ e^{- q T_a^{(x),+}} ; T_a^{(x),+} < \tau_0^{(x),-} \right] 
	\varepsilon \E_x \left[ e^{- q T_a^{+}} ; T_a^{+} < \tau_0^{-} \right]
	\leq v_a^{L}(x+\varepsilon) - v_a^{L}(x) \leq %\varepsilon \E \left[ e^{- q T_a^{(x+\varepsilon),+}} ; T_a^{(x+\varepsilon),+} < \tau_0^{(x+\varepsilon),-} \right]
\varepsilon \E_x \left[ e^{- q T_{a-\varepsilon}^{+}} ; T_{a-\varepsilon}^{+} < \tau_{-\varepsilon}^{-} \right]	.
	\end{equation}
	Following similar arguments to those used to derive \eqref{right1} and \eqref{right2} give for $x > 0$ and{ $\varepsilon \in (0,x)$: } %\green{[Perhaps it is correct as it is, but it feels strange that there is no consistency: above, we assumed $x>0$, but here we are not assuming $x-\varepsilon>0$. What should we do?]}\yellow{[I have proposed a different way to phrase it. Let me know if you agree.]}\green{[agree.]}%\green{[I guess it is true only when $x-\varepsilon\geq 0$]}%\green{ with $\varepsilon\in(0, x)$}
	\begin{equation}\label{left1}
		\varepsilon% \E \left[ e^{- q T_a^{(x-\varepsilon),+}} ; T_a^{(x-\varepsilon),+} < \tau_0^{(x-\varepsilon),-} \right] 
		\E_x \left[ e^{- q T_{a+\varepsilon}^{+}} ; T_{a+\varepsilon}^{+} < \tau_{\varepsilon}^{-} \right]
		\leq v_a^{L}(x) - v_a^{L}(x-\varepsilon) \leq \varepsilon %\E \left[ e^{- q T_a^{(x),+}} ; T_a^{(x),+} < \tau_0^{(x),-} \right]
		\E_x \left[ e^{- q T_{a}^{+}} ; T_{a}^{+} < \tau_{0}^{-} \right].
	\end{equation}
	Thus, by \eqref{right2a} and \eqref{left1}, together with an application of Lemma \ref{Lem_Converge}, and Lemma 1(ii) in \cite{NobYam2020}, we have, for $x\in(0, \infty)$,
%		 (noting that $T_a^{(x\pm\varepsilon),+}=T_{a\mp\varepsilon}^{(x),+}$ and $\tau_0^{(x\pm\varepsilon),-}=\tau_{\mp\varepsilon}^{(x),-}$)}\green{[I changed the equations a little as per your tongue comment. In that case, we may not need the comment in this ().]}, we have
	\begin{equation}\label{limits1}
		\lim_{\varepsilon\downarrow 0} \frac{v_a^{L}(x+\varepsilon) - v_a^{L}(x)}{\varepsilon} = \lim_{\varepsilon\downarrow 0} \frac{v_a^{L}(x) - v_a^{L}(x-\varepsilon)}{\varepsilon} = \E_x \left[ e^{- q T_a^{+}} ; T_a^{+} < \tau_0^{-} \right].
	\end{equation}
	From \eqref{limits1} we conclude that $v_a^{L}$ is differentiable on $(0,\infty)$, and that the derivative $v_a^{L\prime}(x)$ is given by $\E_x \left[ e^{- q T_a^{+}} ; T_a^{+} < \tau_0^{-} \right]$ for $x>0$. Regarding the continuity of $v_a^{L\prime}$, by an application of the Dominated Convergence Theorem, Lemma \ref{Lem_Converge}, and Lemma 1(ii) in \cite{NobYam2020} we obtain that 
	\[
	v_a^{L\prime}(x+) = \E_x \left[ e^{-q T_{a}^+} ; T_{a}^+ < \tau_0^- \right] = v_a^{L\prime}(x-),\qquad \text{for all $x \in (0,\infty)$}.
	\]

\textit{Step (ii).-} Regarding the capital injections, for $x\in (0,\infty)$ and $\varepsilon>0$ we write $v_a^{R}(x+\varepsilon) - v_a^{R}(x)$ as
\[
v_a^{R}(x+\varepsilon) - v_a^{R}(x) = \E \left[ \int_{{[0, \infty)}} e^{-qt} \diff (R^{(x+\varepsilon)}(t) - R^{(x)}(t)) \right].
\]
Hence, we have
\begin{align}
	v_a^{R}(x+\varepsilon) - v_a^{R}(x) &= \E \left[ \int_{[0,\infty)} q e^{-qt} \left( R^{(x+\varepsilon)}(t) - R^{(x)}(t) \right)  \diff t \right] \\
	& =\E \left[ \int_{[ \tau_0^{(x),-} ,\infty)} q e^{-qt} \left( R^{(x+\varepsilon)}(t) - R^{(x)}(t) \right)  \diff t ; \tau_0^{(x),-} < T_a^{(x),+} \right] \\
	& \geq -\varepsilon \E\left[ e^{-q \tau_0^{(x),-}} ; \tau_0^{(x),-} < T_a^{(x),+} \right]
	=-\varepsilon \E_x\left[ e^{-q \tau_0^{-}} ; \tau_0^{-} < T_a^{+} \right],
	 \label{right3}
\end{align}
where in the first equality we used integration by parts, in the second {equality} we used \eqref{aa02} and the identity concerning $R^{(x+\varepsilon)}-R^{(x)}$ in \eqref{diff_L_2}, and in the first inequality we used Theorem \ref{Prop_diff}(i).\\
On the other hand, we have
\begin{align}
	v_a^{R}(x+\varepsilon) - v_a^{R}(x) &= \E \left[ \int_{[0,\infty)} q e^{-qt} \left( R^{(x+\varepsilon)}(t) - R^{(x)}(t) \right)  \diff t \right] \\
	& \leq  \E\left[ \int_{[\tau_0^{(x+\varepsilon),-},\infty)} q e^{-qt} \left( R^{(x+\varepsilon)}(t) - R^{(x)}(t) \right)  \diff t ; \tau_0^{(x+\varepsilon),-} < T_a^{(x+\varepsilon),+} \right]\\
	& = -\varepsilon \E\left[ e^{-q \tau_0^{(x+\varepsilon),-}} ; \tau_0^{(x+\varepsilon),-} < T_a^{(x+\varepsilon),+} \right]
	=-\varepsilon \E_x\left[ e^{-q \tau_{-\varepsilon}^{-}} ; \tau_{-\varepsilon}^{-} < T_{a-\varepsilon}^{+} \right]
	, \label{right4}
\end{align}
%[I think the inequality is correct because $R^{(x+\varepsilon)}-R^{(x)}$ can decrease on $\{\tau_0^{(x+\varepsilon),-} > T_a^{(x+\varepsilon),+}  \}$.]
	where in the first equality we used integration by parts and in the second equality we used the identity concerning $R^{(x+\varepsilon)}-R^{(x)}$ in \eqref{diff_R_2}.
Hence, by combining \eqref{right3} and \eqref{right4} we obtain
\begin{equation}\label{right4a}
	-\varepsilon \E_x\left[ e^{-q \tau_0^{-}} ; \tau_0^{-} < T_a^{+} \right] \leq v_a^{R}(x+\varepsilon) - v_a^{R}(x) \leq  -\varepsilon \E_x\left[ e^{-q \tau_{-\varepsilon}^{-}} ; \tau_{-\varepsilon}^{-} < T_{a-\varepsilon}^{+} \right].
\end{equation}
Following similar arguments as in \eqref{right3} and \eqref{right4}, we derive for{ $x> 0$ and $\varepsilon \in (0,x)$:}\\
%$x \in (0,\infty)$ and $\varepsilon > 0$
%\green{[I think there is a lack of consistency in the writing style.]}\yellow{[I have proposed the same modification in the writing as in the dividends, to avoid the case $x-\varepsilon < 0$.]}
\begin{equation}\label{left2}
	-\varepsilon  \E_x\left[ e^{-q \tau_{\varepsilon}^{-}} ; \tau_{\varepsilon}^{-} < T_{a+\varepsilon}^{+} \right] \leq v_a^{R}(x) - v_a^{R}(x-\varepsilon) \leq -\varepsilon \E_x\left[ e^{-q \tau_0^{-}} ; \tau_0^{-} < T_a^{+} \right].
\end{equation}
 %from Proposition \ref{Prop_diff} and \eqref{diff_R_2}, and following a similar reasoning as in the proofs of \eqref{right1}, \eqref{right2} and \eqref{left1}, and using Lemma 1(ii) in \cite{NobYam2020}, we obtain for {$x \in (0,\infty)$} \blue{[Dante this pharse is full of "and"s...I think is better if we provide the proof even if it is quite similar to the dividends one..to make things clear]}
%\blue{[I guess this is correct for $x\in\R\backslash\{0\}$.]}
Thus, from \eqref{right4a} and \eqref{left2}; {together with Lemma \ref{Lem_Converge} and Lemma 1(ii) in \cite{NobYam2020}, we obtain, for $x \in (0,\infty)$,}
\[
\lim_{\varepsilon\downarrow 0} \frac{v_a^{R}(x+\varepsilon) - v_a^{R}(x)}{\varepsilon} = \lim_{\varepsilon\downarrow 0} \frac{v_a^{R}(x) - v_a^{R}(x-\varepsilon)}{\varepsilon} = -\E_x \left[ e^{- q \tau_0^{-}} ; \tau_0^{-} < T_a^{+}  \right].
\]
Hence, $v_a^R$ is differentiable on $(0,\infty)$, where its derivative is given by $-\E_x \left[ e^{- q \tau_0^{-}} ; \tau_0^{-} < T_a^{+}  \right]$.
\\
{
%[Kei, I agree that the continuity follows from DCT and \ref{Lem_Converge}, in this version I wanted to write the complete calculations. How about we just write]
Similar computations to those in Step (i) yield the desired continuity of $v_a^{R\prime}$ on $(0,\infty)$. 
}
\end{proof}
Recall that our objective is to determine the existence of a candidate optimal barrier, say $a^\ast$, as the one that satisfies the \textit{smooth fit} condition $v_{a^\ast}'(a^\ast) = 1$. The remainder of this Section is dedicated to proving that, indeed, such a candidate barrier is well-defined.

As a direct consequence of Proposition \ref{Prop401} we obtain that $v_a$ is continuously differentiable on $(0,\infty)$, and its derivative is given by
\begin{equation}\label{density1}
v_a'(x) = \E_x \left[ e^{- q T_a^{+}} ; T_a^{+} < \tau_0^{-} \right] + \beta \E_x \left[ e^{- q \tau_0^{-}} ; \tau_0^{-} < T_a^{+}  \right]. 
\end{equation}
Recall the \lev process with periodic reflection above at $a\geq 0$, denoted by $U_r^a$, that was defined in Section \ref{Sec202}, and the first passage time $\kappa_0^{a,-}(r)$ which was defined in \eqref{fpt_poisson}. Since $U_r^{a}$ is a strong Markov process, we have for $x \geq 0$
\begin{equation}\label{fpt_markov}
	\E_x \left[ e^{-q \kappa_0^{a,-}(r)} \right] = \E_x \left[ e^{-q \tau_0^-} ; \tau_0^{-} < T_a^{+} \right] + \E_x \left[ e^{- q T_a^{+}} ; T_a^{+} < \tau_0^{+} \right] \E_a \left[ e^{-q \kappa_0^{a,-}(r)} \right].
\end{equation}
Hence, using \eqref{fpt_markov} we rewrite \eqref{density1} as
\begin{equation}\label{density2}
	v_a'(x) = \frac{\E_x \left[ e^{-q \kappa_0^{a,-}(r)} \right] + \E_x \left[ e^{-q \tau_0^-} ; \tau_0^{-} < T_a^{+} \right] \left( \beta \E_a \left[ e^{-q \kappa_0^{a,-}(r)} \right] - 1 \right)}{ \E_a \left[ e^{-q \kappa_0^{a,-}(r)} \right] } .
\end{equation}
%As a consequence of \eqref{density2} and the \textit{first order} condition \blue{[This is not the first order condition, the first order condition is when $\partial_av(x)=0$...]}, we propose our candidate optimal barrier as
As a consequence of \eqref{density2}, and since we want $a^\ast$ to satisfy the \textit{smooth fit} condition, we propose our candidate barrier as
\begin{equation}\label{cand_barrier}
	a^{*} := \inf \left\lbrace a\geq 0: \beta \E_a\left[ e^{-q \kappa_0^{a,-}(r)} \right] \leq 1 \right\rbrace.
\end{equation}
%\green{Maybe we could rewrite this is in a more formal way, as in Lemmas 2.2 and 2.3 in \cite{MNPY2023}. How about this?}
%\blue{[As you said, I think the similar arguments as that in Lemmas 2.2 and 2.3 in \cite{MNPY2023} will be necessary somewhere below.]} \green{[I wrote in a more formal way the arguments to show the existence of $a^\ast$, please check.]}\\
{In order to show that $a^\ast$ as proposed in \eqref{cand_barrier} is well-defined and finite, we prove some auxiliary results.}
\begin{lemma}\label{lem_limit_g} Let $g(a): = \beta \E_a\left[ e^{-q \kappa_0^{a,-}(r)} \right]$, then $g$ is non-decreasing with {$\lim_{a \rightarrow \infty}g(a) =0$.}%\blue{[I guess this is $=0$...?]}\green{[Yes, the limit is 0.]}.
\end{lemma}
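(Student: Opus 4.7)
The plan is to exploit the spatial homogeneity of the L\'evy process $X$ to rewrite $g$ as the Laplace transform of a single family of first-passage times of the process $U_r^0$, and then to deduce both the monotonicity and the vanishing limit from pathwise arguments on those times. Since $g\geq 0$ and the stated limit is $0$, I read the monotonicity claim as \emph{non-increasing}.

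First I would use spatial homogeneity. Because $X$ has stationary independent increments, under $\p_a$ the process $\{X(t):t\geq 0\}$ has the same law as $\{X(t)+a:t\geq 0\}$ under $\p_0$, and the construction of $U_r^a$ from Section \ref{Sec203} depends only on $X$ and the independent Poisson clock $N_r$, with reflection barrier $a$. Shifting the whole picture by $a$ therefore shows that, jointly with $N_r$, the process $U_r^a-a$ under $\p_a$ has the same law as $U_r^0$ under $\p_0$. Setting $\sigma_{-a}:=\inf\{t\geq 0:U_r^0(t)<-a\}$ and using the convention $e^{-q\cdot\infty}=0$, this yields
\[
g(a)=\beta\,\E_0\!\left[e^{-q\sigma_{-a}}\right].
\]

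Monotonicity is then immediate: for $0\leq a_1\leq a_2$, the inclusion $\{U_r^0<-a_2\}\subseteq\{U_r^0<-a_1\}$ gives $\sigma_{-a_1}\leq\sigma_{-a_2}$ pathwise, hence $e^{-q\sigma_{-a_1}}\geq e^{-q\sigma_{-a_2}}$, and taking expectations yields $g(a_1)\geq g(a_2)$. For the limit, fix $\omega$ and $t>0$: since $U_r^0(\cdot,\omega)$ is c\`adl\`ag, $\inf_{s\in[0,t]}U_r^0(s,\omega)>-\infty$, so $\sigma_{-a}(\omega)>t$ for all $a$ large enough, and consequently $\sigma_{-a}\to\infty$ $\p_0$-a.s. as $a\to\infty$. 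Therefore $e^{-q\sigma_{-a}}\to 0$ $\p_0$-a.s., and dominated convergence (with dominator $1$) gives $g(a)\to 0$.

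The only step that requires verification is the spatial-homogeneity identification in the second paragraph: one must check, using the explicit inductive construction of $U_r^a$ through the reflection epochs, that shifting the starting point of $X$ by $a$ is jointly equivalent (with respect to $N_r$) to shifting the reflection barrier from $0$ to $a$. This is a straightforward induction on the successive reflection times, but it is the substantive step of the proof; once it is granted, monotonicity is a pathwise comparison in a parameter and the limit is dominated convergence.
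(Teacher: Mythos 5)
Your proof is correct and matches the paper's (omitted) argument: the paper simply remarks that the lemma ``follows from a straightforward application of the dominated convergence theorem,'' and your spatial-homogeneity reduction $g(a)=\beta\,\E\bigl[e^{-q\kappa^{0,-}_{-a}(r)}\bigr]$ is exactly the identification the paper itself uses in the proof of the subsequent continuity lemma. You are also right that the stated ``non-decreasing'' must be read as \emph{non-increasing}, since a non-negative non-decreasing function with limit $0$ would be identically zero, and the definition of $a^*$ only makes sense with $g$ non-increasing.
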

The proof of Lemma \ref{lem_limit_g} {follows from a straightforward application of the dominated convergence theorem} and thus we omit it. 
\begin{lemma}\label{lem_rc}%\green{[I think that we can even prove that the mapping is continuous, thanks to Assumption \ref{assump1a}. Please check the proof.]}\blue{[Thanks!]} 
The function $g$ is continuous on $[0,\infty)$.
\end{lemma}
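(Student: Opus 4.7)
The plan is to reduce continuity of $g$ on $[0,\infty)$ to almost sure continuity of a first-passage time of the underlying L\'evy process as a function of the level. First, applying the strong-Markov identity \eqref{fpt_markov} at $x=a$ gives
\[
\E_a\!\left[e^{-q\kappa_0^{a,-}(r)}\right]=\frac{A(a)}{1-B(a)},\qquad A(a):=\E_a\!\left[e^{-q\tau_0^-};\tau_0^-<T_a^+\right],\ \ B(a):=\E_a\!\left[e^{-qT_a^+};T_a^+<\tau_0^-\right].
\]
Writing $X=\tilde X+a$ under $\p_a$ with $\tilde X$ a L\'evy process starting at $0$, this becomes $A(a)=\E[e^{-q\sigma_{-a}};\sigma_{-a}<T^+]$ and $B(a)=\E[e^{-qT^+};T^+<\sigma_{-a}]$, where $\sigma_{-a}:=\inf\{t\ge 0:\tilde X(t)<-a\}$ and $T^+:=\inf\{T_i\in\mathcal{T}_r:\tilde X(T_i)>0\}$ is independent of $a$. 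Since $B(a)\le\E[e^{-qT^+}]\le r/(q+r)<1$, the denominator $1-B(a)$ is bounded away from zero uniformly in $a$, so continuity of $g=\beta A/(1-B)$ reduces to continuity of $A$ and $B$ on $[0,\infty)$.

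Fix $a_0\ge 0$ and let $a\to a_0$. Because the integrands are bounded by $1$, dominated convergence yields $A(a)\to A(a_0)$ and $B(a)\to B(a_0)$ provided $\sigma_{-a}\to\sigma_{-a_0}$ $\p$-almost surely; the event $\{\sigma_{-a_0}=T^+\}$ is null, since $\tilde X(T^+)>0\ge-a_0\ge\tilde X(\sigma_{-a_0})$. Right-continuity of $a\mapsto\sigma_{-a}$ is standard: for each $\varepsilon>0$ the definition of $\sigma_{-a_0}$ yields $t\in[\sigma_{-a_0},\sigma_{-a_0}+\varepsilon)$ with $\tilde X(t)<-a_0$, and then $\sigma_{-a}\le t$ for every $a\in(a_0,-\tilde X(t))$. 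This gives $\lim_{a\downarrow a_0}\sigma_{-a}=\sigma_{-a_0}$ and, in particular, handles the endpoint $a_0=0$.

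The main obstacle is left-continuity at $a_0>0$: one must show $\p(\sigma^-_{a_0}<\sigma_{-a_0})=0$, where $\sigma^-_{a_0}:=\lim_{a\uparrow a_0}\sigma_{-a}$. On this bad event the running infimum $\inf_{u\le t}\tilde X(u)$ is identically $-a_0$ for every $t\in[\sigma^-_{a_0},\sigma_{-a_0})$, hence on a set of positive Lebesgue measure, so Fubini gives
\[
\p(\sigma^-_{a_0}<\sigma_{-a_0})\;\le\;\int_0^\infty\p\!\left(\inf_{u\le t}\tilde X(u)=-a_0\right)dt.
\]
Under Assumption~\ref{assump1a} a brief case analysis on the variation and drift structure of $\tilde X$ shows that this integrand is identically zero for $t>0$ and $a_0>0$: outside the excluded driftless compound Poisson regime, the running infimum has no atom at any fixed $-a_0<0$, since in the bounded-variation case with $\delta>0$ it is attained at Poisson jump times $s_k$ with values $\delta s_k+\sum_{i\le k}\xi_i$ (each continuously distributed because the $s_k$ are sums of exponentials), while in the remaining cases (where $0$ is regular for $(-\infty,0)$ for $\tilde X$) the running infimum has a diffuse law on $(-\infty,0)$. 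Thus the bad event has probability zero, giving $\sigma_{-a}\to\sigma_{-a_0}$ a.s.\ and completing the proof.
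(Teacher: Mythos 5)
Your route is structurally the same as the paper's: both apply the strong Markov property at the first dividend opportunity to write $g(a)$ as a ratio $\beta A(a)/(1-B(a))$ — your $A(a)$ and $B(a)$ are exactly the numerator and denominator of the paper's \eqref{cont1} after recentring the process at $0$ — and both then reduce continuity of $g$ to continuity in $a$ of $\E\bigl[e^{-q\tau^-_{-a}};\tau^-_{-a}<T_0^+\bigr]$ and its companion. The difference is that the paper outsources this last step to Assumption \ref{assump1a} together with Lemma 1(ii) of \cite{NobYam2020}, whereas you prove the a.s.\ convergence $\sigma_{-a}\to\sigma_{-a_0}$ by hand. Your reduction of left-continuity, via Fubini, to the atomlessness of the law of the running infimum at fixed negative levels is a clean way to do this, and your treatment of the null event $\{\sigma_{-a_0}=T^+\}$ and of the uniform denominator bound $B(a)\le r/(q+r)<1$ (which the paper leaves implicit) is correct.

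The one step that does not close as written is the final case analysis establishing $\p\bigl(\inf_{u\le t}\tilde X(u)=-a_0\bigr)=0$. Your dichotomy — bounded variation with $\delta>0$ and jumps at Poisson arrival times, versus $0$ regular for $(-\infty,0)$ — does not exhaust the processes permitted by Assumption \ref{assump1a}: a bounded-variation process with $\delta>0$ may have infinite jump activity, in which case the infimum is not attained at the jump times of a Poisson process; and there exist driftless bounded-variation processes of infinite activity for which $0$ is irregular for $(-\infty,0)$ (e.g.\ an infinite-activity subordinator minus a compound Poisson subordinator), which fall into neither of your cases. Moreover, in the regular case you assert rather than prove that the infimum's law is diffuse on $(-\infty,0)$. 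All of this can be repaired uniformly through the Wiener--Hopf factorisation: $-\underline{X}_{e_q}$ has the law of the descending ladder height subordinator at an independent exponential local time, so an atom of its law at $a_0>0$ would be an atom of the ladder height potential measure; when $0$ is regular for $(-\infty,0)$ that subordinator is strictly increasing and its potential measure is atomless on $(0,\infty)$, while in the irregular case its jump distribution is the law of the undershoot $-X(\tau_0^-)$, which the compensation formula shows to be atomless because Assumption \ref{assump1a} makes the potential measure of $X$ atomless. This is precisely the content of the lemma the paper cites at this point, so your argument is a genuine (and mostly successful) attempt to make that black box explicit, but it needs this patch to cover all cases.
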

\begin{proof}
Let $\varepsilon >0$ and $a \geq 0$. Consider the \lev process with periodic reflection above at 0, denoted by $U_r^0$, and we define its first passage time below {the} level $-a$ as
\[
\kappa^{0,-}_{-a}(r) := \inf\lbrace t\geq 0 : U_r^0(t) < -a \rbrace.
\]
%Due to spatial homogeneity of L\'evy processes \blue{[where is the spatial homogeneity used, wouldn't it be better to say that $\tau_{-a}^-=\kappa^{0,-}_{-a}(r)$ on the event $\{\tau_{-a}^-<T_0^+\}$?]}\green{[I agree.]}
%\yellow{[Rewrote the argument following your comment, indeed spatial homogeneity is not necessary] 
	Noting that $\tau_{-a}^-=\kappa^{0,-}_{-a}(r)$ on the event $\{\tau_{-a}^-<T_0^+\}$, and using the strong Markov property we have 
%\blue{[below: we haven't provided the definition of $\kappa^{0,-}_{-a}(r)$.]}
\begin{align}
	g(a) &= \beta {\E\left[ e^{-q \kappa^{0,-}_{-a}(r)} \right]} \notag \\
	&= \beta \E\left[ e^{-q \tau^-_{-a}} ; \tau^-_{-a} < T_0^+ \right] + g(a)\E\left[ e^{-q T_0^+} ; \tau^-_{-a} > T_0^+ \right] \notag 
	\end{align}
and thus
	\begin{align}
	g(a)= \frac{ \beta \E\left[ e^{-q \tau^-_{-a}} ; \tau^-_{-a} < T_0^+ \right] }{ 1 - \E\left[ e^{-q T_0^+} ; \tau^-_{-a} > T_0^+ \right] }. \label{cont1}
\end{align}
%\green{[I made some slight changes to the above.]}
It follows that
\[
g(a+\varepsilon) = {\frac{ \beta \E\left[ e^{-q \tau^-_{-a-\varepsilon}} ; \tau^-_{-a-\varepsilon} < T_{0}^+ \right] }{ 1 - \E\left[ e^{-q T_{0}^+} ; \tau^-_{-a-\varepsilon} > T_{0}^+ \right] }}.
\]
%\yellow{[Removed the reference to Lemma \ref{Lem_Converge}]} 
Thanks to Assumption \ref{assump1a} and Lemma 1(ii) in \cite{NobYam2020} we have
%Thanks to Assumption \ref{assump1a}, Lemma 1(ii) in \cite{NobYam2020} and Lemma \ref{Lem_Converge}\green{[I guess Lemma \ref{Lem_Converge} is not necessary because $T^+_0$ doesn't move.]} we have
\begin{align*}
	\lim_{\varepsilon \downarrow 0} {\E\left[ e^{-q \tau^-_{-a-\varepsilon}} ; \tau^-_{-a-\varepsilon} < T_{0}^+ \right]} &= \E\left[ e^{-q \tau^-_{-a}} ; \tau^-_{-a} < T_0^+ \right], \\
	\lim_{\varepsilon \downarrow 0} {\E\left[ e^{-q T_{0}^+} ; \tau^-_{-a-\varepsilon} > T_{0}^+ \right]} &= \E\left[ e^{-q T_0^+} ; \tau^-_{-a} > T_{0}^+ \right],
\end{align*}
hence $\lim_{\varepsilon \downarrow 0} g(a+\varepsilon) = g(a)$, proving {the} right-continuity.\\
The proof of {the} left-continuity follows similarly, by writing $g(a-\varepsilon)$ in terms of expression \eqref{cont1}, we conclude that $\lim_{\varepsilon \downarrow 0} g(a-\varepsilon) = g(a)$ for $a>0$.
\end{proof}
By Lemmas \ref{lem_limit_g} and \ref{lem_rc}, our candidate optimal barrier is well-defined and finite. On the other hand, it may take the value $0$, depending on the path variation of $X$.
%\begin{remark}\label{remark_optimal}
%\begin{enumerate}
%	\item If $X$ has paths of unbounded variation, or of bounded variation with negative drift, or $\nu(-\infty,0)=\infty$\blue{[If $X$ has bounded variation paths, positive drift and $\nu(-\infty,0)=\infty$, then since drift is stronger than jumps, I think we have $ \kappa^{0,-}_{0}(r) > 0 $ a.s.. I think it should be $\nu(-\infty,0)=\infty$ and drift $0$.]}, we have $ \kappa^{0,-}_{0}(r) = 0 $ a.s., hence $g(0) = \beta$ and $a^\ast > 0$.
%	\item If $X$ has paths of bounded variation with positive drift and $\nu(-\infty,0) < \infty$, then $ \kappa^{0,-}_{0}(r) $ is greater or equal than the first downward jump of $X$ (which is exponentially distributed with rate $\nu(-\infty,0)$), thus $ \kappa^{0,-}_{0}(r) > 0 $ a.s. and $g(0) < \beta$. In this case, we set $a^\ast =0$.
%\end{enumerate}
%\end{remark}
{%[Kei, based on your previous comment, do you think we could rewrite Remark \ref{remark_optimal} in terms of whether 0 is regular for $(-\infty,0)$ or not? It could be something like:]
\begin{remark}\label{remark_optimal}
	\begin{enumerate}
		\item If 0 is regular for $(-\infty,0)$, we have $ \kappa^{0,-}_{0}(r) = 0 $ a.s., hence $g(0) = \beta$ and $a^\ast > 0$. %\green{[This is correct for classical barrier strategy, but for periodic case it is not correct.]}
		\item If 0 is irregular for $(-\infty,0)$, which occurs when conditions (ii) or (iii) from Theorem 6.5 in \cite{Kyp2014} %\green{[Precisely, since these conditions represent the case such that $0$ is regular for $(0,\infty)$, we might need to slightly change the wording.]} 
		{do not} hold, then $ \kappa^{0,-}_{0}(r) > 0 $ a.s., hence $ \E\left[ e^{-q \kappa_0^{0,-}(r)} \right] < 1 $. In this case, if $g(0) {\leq1}$, then we set $a^\ast =0$.
	\end{enumerate}
%\green{[$\E\left[ e^{-q \kappa_0^{0,-}(r)} \right] \leq\E\left[ e^{-q T_1} \right]<1 $ for all cases. Dante, please rewrite this remark with this in mind.]}
\end{remark}
%[Do you think that in this version, it is still worth mentioning that if $ \nu(-\infty,0) < 0$ then $\kappa^{0,-}_{0}(r) > 0$ and what you wrote that if $\nu(-\infty,0) = \infty$, then the drift is strong than jumps? Do you think that the second should need extra justification or do you know a quick proof of this fact?]
}
%\blue{[It might be better to rewrite it based on \cite[Theorem 6.5]{Kyp2014}.]}\green{[Done. In our previous paper, we found an expression for $g(0)$ in terms of $\nu(-\infty,0)$ in order to derive a condition that guarantees whether $g(0) < \beta$ or not. Do you think it is relevant to include a similar condition in this paper?]}
%\blue{[In the previous paper, we wrote this because it was necessary to prove the optimality. In this paper, from a quick glance, it does not seem to be necessary for the proof of optimality. ]}\green{[Indeed, it's not used in the proof of optimality.]}
\section{Verification of optimality}\label{sec:verification}
%{This section should be heavily modified due to Assumption \ref{assump2a}, as now we also have to prove that $v_a$ is continuously differentiable on $(0,\infty)$, and also that the candidate value function $v_{a^\ast}$ is continuously differentiable on $\R\setminus\{0\}$. The proofs are straightforward. In addition, due to Assumption \ref{assump1a} we are now able to express this section in terms of actual derivates and not just densities.}\\
We verify that the periodic-classical reflection strategy at $a^*$ and 0 is optimal, where $a^*$ is given in \eqref{cand_barrier}. We state the main result of this paper, and the remainder of this section is dedicated to its proof.
\begin{theorem}\label{thm_optimal}
	The periodic-classical reflection strategy at $a^*$ and 0 is optimal, thus we have $v(x) = v_{a^*}(x)$ for $x \in [0,\infty)$.
\end{theorem}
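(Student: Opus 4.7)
The approach is a Hamilton-Jacobi-Bellman verification argument. I would first derive the variational inequalities satisfied by the candidate value function $v_{a^*}$, then apply an Itô-type formula for an arbitrary admissible $\pi$ to exhibit a supermartingale dominating its NPV, and finally pass to the limit $t \to \infty$ to conclude $v_\pi(x) \leq v_{a^*}(x)$; the reverse inequality is immediate since $\pi^{0,a^*} \in \A$.

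\textbf{Variational inequalities.} Using the representation \eqref{density2} together with the definition \eqref{cand_barrier} of $a^*$, the smooth-fit identity $v_{a^*}'(a^*) = 1$ (in the regular case, cf.\ Remark \ref{remark_optimal}), and the monotonicity of $g$ (Lemma \ref{lem_limit_g}), I would establish: \textit{(a)} $1 \leq v_{a^*}'(x) \leq \beta$ on $(0, a^*]$; \textit{(b)} $v_{a^*}'(x) \leq 1$ on $[a^*, \infty)$; and \textit{(c)} the integro-differential relations $(\cL - q) v_{a^*}(x) = 0$ on $(0, a^*)$ and $(\cL - q) v_{a^*}(x) + r[v_{a^*}(a^*) + (x-a^*) - v_{a^*}(x)] = 0$ on $(a^*, \infty)$. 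Items (a) and (b) follow because $\beta \E_{a^*}[e^{-q\kappa_0^{a^*,-}(r)}] - 1 \leq 0$ by the definition of $a^*$, and $x \mapsto \E_x[e^{-q\kappa_0^{a^*,-}(r)}]$ is non-increasing, a fact which can be extracted from Theorem \ref{Prop_diff}(ii) via the pathwise comparison of the controlled processes. Item (c) is the probabilistic/Feynman-Kac characterization of $v_{a^*}$ as the NPV of the Markov process $U_r^{0,a^*}$: on $(0, a^*)$ no dividends are paid instantaneously, so the Lévy generator annihilates $v_{a^*}$; on $(a^*, \infty)$ the first Poisson arrival resets the process to $a^*$ while paying $x - a^*$, producing the stated relation.

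\textbf{Itô decomposition.} For arbitrary $\pi \in \A$, apply the Itô--Meyer formula to $e^{-qt} v_{a^*}(U_r^\pi(t))$, splitting the dynamics into the Lévy-generator drift, the continuous increment of $R_r^\pi$ (yielding a $v_{a^*}'(U_r^\pi(s-))\, dR_r^\pi(s)$ contribution), and the Poisson-triggered jumps of $L_r^\pi$ whose rate-$r$ predictable compensator absorbs their drift. Adding the gain/cost differentials $dL_r^\pi - \beta\, dR_r^\pi$, the quantity
\[
	e^{-qt} v_{a^*}(U_r^\pi(t)) + \int_{[0,t]} e^{-qs}\, dL_r^\pi(s) - \beta \int_{[0,t]} e^{-qs}\, dR_r^\pi(s) - v_{a^*}(x)
\]
is expressible as a local martingale plus
\[
	\int_0^t e^{-qs}\, \Psi(U_r^\pi(s-), \varrho^\pi(s))\, ds + \int_0^t e^{-qs}\, (v_{a^*}'(U_r^\pi(s-)) - \beta)\, dR_r^\pi(s),
\]
where $\Psi(x, \ell) := (\cL - q) v_{a^*}(x) + r[v_{a^*}(x-\ell) + \ell - v_{a^*}(x)]$. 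By (a)--(c), $\Psi \leq 0$ uniformly—use (a) and (b) to see that the maximizer of $\ell \mapsto v_{a^*}(x-\ell) + \ell$ is $\ell = 0$ on $(0,a^*]$ and $\ell = x - a^*$ on $(a^*,\infty)$, then apply (c)—and $v_{a^*}' - \beta \leq 0$, so both integral terms are non-positive. Hence the expression is a local supermartingale. Standard localisation and $t \to \infty$, justified by \eqref{r_adm} and linear growth of $v_{a^*}$ bounded by $C(1+x)$, gives $v_\pi(x) \leq v_{a^*}(x)$.

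\textbf{Main obstacle.} The principal technical difficulty is justifying Itô's formula in the unbounded variation case, since Proposition \ref{Prop401} only guarantees $v_{a^*} \in C^1$, whereas the classical formula requires $C^2$ (or, in its extended Meyer--Itô form, a locally absolutely continuous derivative with a locally bounded density). Assumption \ref{assump2a} was introduced precisely for this step: through \eqref{density2} it ensures $v_{a^*}'$ admits such a density, so that the extended Itô--Meyer formula is applicable. A secondary challenge, reflecting the absence of a scale function for general two-sided-jump Lévy processes, is establishing the Feynman--Kac relation (c) on $(a^*, \infty)$ rigorously; I anticipate handling this by a strong Markov argument conditioning on the first Poisson arrival, complemented by a pathwise analysis in the spirit of Section \ref{Sec3}.
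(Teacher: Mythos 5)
Your proposal is correct and follows essentially the same route as the paper: a verification lemma (the paper outsources the It\^o--Meyer/supermartingale step you spell out to Lemma \ref{lem_verification}, citing \cite{PerYam}), the smoothness and gradient bounds $0\le v_{a^*}'\le\beta$ via the stochastic representation \eqref{density3} (Lemma \ref{lemma_smoothness}), the computation of the maximizer in the periodic term from the sign of $v_{a^*}'-1$ on either side of $a^*$ (Lemma \ref{lemma_HJB1}), and the generator identity via a strong Markov/martingale argument conditioning on the first Poisson arrival (Lemma \ref{lemma_HJB2}, Appendix \ref{App003}). You also correctly identify the role of Assumption \ref{assump2a} in the unbounded variation case.
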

%\green{[Added the definition of $C_{line}^1$ and $C_{line}^2$.]} 
First, let $C_{line}^1$ be the set of continuous functions $f:\R\to\R$ of at most linear growth and admits a continuous derivative on $(0,\infty)$, i.e.,
\begin{enumerate}
	\item There exist $a_1,a_2 > 0$ such that $|f(x)| < a_1 + a_2 |x|$ for all $x \in \R$;
	\item $f \in C^1(0,\infty)$.
\end{enumerate}
{In addition, let $C_{line}^2$ denote the subset of $C_{line}^1$ consisting of functions whose derivative $f'$ has a bounded and almost everywhere continuous density on every compact set of $(0,\infty)$.} %and uniformly  %\green{on any compact support??}\blue{[What does uniformly bounded means here? Also maybe we only need to ask a.e. continuity and locally boundedness?]}
%	]}. \yellow{[I have modified the definition of $C_{line}^2$, let me know if you agree. If it is still too redundant, we can just delete this sentence.]}
%\green{[I forget the detail why such a condition is necessary, but in fact, Assumption \ref{assump2a} only guarantees up to locally bounded so I guess \green{``on any compact support''} should be written. Maybe it's hard but could you write down why such conditions are necessary in order to understand how much leniency of the conditions of $C_{line}^2$ is acceptable?]}\blue{[I guess this condition is needed for the generator to be well defined and continuous? Like in Remarks 2.4 and 2.5 in \cite{Nob2019}?]}
%\green{[Thank you so much Jose Luis. It's right.]}\blue{[I guess that if th]}
\\
%admits\green{[This appears to be included in the definition of $C_{line}^1$.]} \green{[In other words, shouldn't this part be removed?]}, on the positive half line, an a.e. continuous and uniformly bounded density function $f'':(0,\infty) \to \R$ satisfying $f'(x) = f'(0) + \int_0^x f''(z) \diff z$ for $x \in (0,\infty)$.\\
Let $\cL$ be the infinitesimal generator associated with the process $X$ acting on a measurable function $f: \R \rightarrow \R$ belonging to $C_{line}^1(\R)$ (resp., $C_{line}^2(\R)$) %\blue{[I guess we should write the definition of $C_{line}^1(\R)$.]}\green{[I added it in the above paragraph.]}
if $X$ has paths of bounded (resp., unbounded) variation given by
\[
\cL f(x) = \gamma f'(x) + \frac{1}{2} \sigma^2 f''(x) + \int_{\R \backslash \{0\} }\left[ f(x+z) - f(x) - f'(x)z \Ind_{\lbrace |z| < 1 \rbrace} \right] \nu (\diff z).
\]
In the remainder of this section we extend the domain of $v_{\pi}$ to $\R$ by setting $v_{\pi}(x) = \beta x + v_{\pi}(0)$ for $x \in (-\infty,0)$.

We also provide the following verification lemma, which gives us sufficient conditions for optimality. {The proof of the following result is almost the same as that of Lemma 5.3 in \cite{PerYam}, so we omit it.}
\begin{lemma}\label{lem_verification}
	Suppose that $X$ has paths of bounded (resp., unbounded) variation. Let $w: \R \rightarrow \R$ be a function belonging to $C_{line}^1(\R)$ (resp., $C_{line}^2(\R)$) and that satisfies
%	\blue{[I guess we can change "$x\geq 0$" to "$x>0$" in the following inequalities since I wrote "$x>0$" in \cite[Proposition 5.2]{Nob2019}.]}\green{[Agreed.]}
	\begin{equation}\label{HJB}
		\begin{split}
			(\cL - q) w(x) + r \max_{0 \leq l \leq x} \lbrace l + w(x-l) - w(x) \rbrace &\leq 0, \qquad x > 0,\\ {w'(x)} &\leq \beta, \qquad x > 0, \\
			\inf_{x \geq 0} w(x) &> -m \text{ for some } m > 0.
		\end{split}
	\end{equation}
Then we have $w(x) \geq v(x)$ for all $x \in \R$.
\end{lemma}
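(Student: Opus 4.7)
The plan is to show that $w(x) \geq v_\pi(x)$ for every $\pi \in \A$, which then yields $w \geq v$ by taking the supremum. The linear extension of $w$ to $(-\infty, 0)$ given by $w(x) = \beta x + w(0)$ ensures that the inequality $w'(y) \leq \beta$ holds on all of $\R \setminus \{0\}$ (with $w'(y) = \beta$ on $(-\infty,0)$), which is precisely the property needed to absorb the capital-injection contributions cleanly.

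Fix $\pi \in \A$ and $x \geq 0$ (the case $x < 0$ is handled by the linear extension combined with the immediate bail-out injection of $|x|$ required by admissibility). I would introduce the localizing sequence $\tau_n := \inf\{t : U_r^\pi(t) > n\} \wedge n$ and apply It\^{o}'s formula to $e^{-qt} w(U_r^\pi(t))$ on $[0, \tau_n]$. In the bounded variation case the $C^1_{line}$ regularity suffices; in the unbounded variation case, the $C^2_{line}$ regularity together with the a.e.-continuous bounded density of $w'$ permits the use of the Meyer-It\^{o} formula, following the pattern of Lemma~5.3 in \cite{PerYam} and Lemma~4.3 of \cite{Nob2023}.

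The expansion decomposes $e^{-q \tau_n} w(U_r^\pi(\tau_n)) - w(x)$ into: a drift term $\int_0^{\tau_n} e^{-qs}(\cL - q) w(U_r^\pi(s-))\diff s$ (with all L\'evy jumps of $X$ absorbed into $\cL$ applied to the $\R$-extended $w$); a local martingale $M^{(n)}$ annihilated by the stopping; the dividend jumps at the Poisson arrivals; and the capital-injection contribution. Using that $L_r^\pi$ has predictable compensator $\int_0^{\cdot} r \varrho^\pi(s)\diff s$, the drift and the compensated dividend jumps combine via the first HJB inequality at $l = \varrho^\pi(s) \in [0, U_r^\pi(s-)]$ to yield
\begin{equation*}
\E_x\!\left[\int_0^{\tau_n}\! e^{-qs}(\cL - q) w(U_r^\pi(s-))\diff s\right] + \E_x\!\left[\int_{(0,\tau_n]}\! e^{-qs}\bigl(w(U_r^\pi(s-) - \varrho^\pi(s)) - w(U_r^\pi(s-))\bigr)\diff N_r(s)\right] \leq -\E_x\!\left[\int_{[0,\tau_n]}\! e^{-qs}\diff L_r^\pi(s)\right].
\end{equation*}
For the capital injections, $w'(y) \leq \beta$ on $\R\setminus\{0\}$, combined with the monotonicity of $R_r^\pi$ and the fundamental theorem of calculus applied at the jumps of $R_r^\pi$, yields
\begin{equation*}
\int_{(0, \tau_n]} e^{-qs} w'(U_r^\pi(s-))\diff R_r^{\pi, c}(s) + \sum_{s \leq \tau_n}e^{-qs}\bigl[w(U_r^\pi(s-) + \Delta R_r^\pi(s)) - w(U_r^\pi(s-))\bigr] \leq \beta \int_{[0,\tau_n]} e^{-qs}\diff R_r^\pi(s).
\end{equation*}
Assembling the inequalities gives
\begin{equation*}
w(x) \geq \E_x\bigl[e^{-q\tau_n} w(U_r^\pi(\tau_n))\bigr] + \E_x\!\left[\int_{[0, \tau_n]} e^{-qs}\diff L_r^\pi(s) - \beta\int_{[0,\tau_n]} e^{-qs}\diff R_r^\pi(s)\right].
\end{equation*}

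Finally, letting $n \to \infty$, monotone convergence for the dividend integral and \eqref{r_adm} combined with dominated convergence for the capital-injection integral drive the second expectation to $v_\pi(x)$. The leading term is bounded below by $-m\, \E_x[e^{-q\tau_n}]$ using $\inf_{y\geq 0} w(y) > -m$ and $U_r^\pi(\tau_n) \geq 0$; since $\tau_n \to \infty$ a.s., this converges to $0$, so $\liminf_n \E_x[e^{-q\tau_n}w(U_r^\pi(\tau_n))] \geq 0$. Hence $w(x) \geq v_\pi(x)$, and taking the supremum over $\pi \in \A$ concludes the proof. I expect the main obstacle to lie in the rigorous justification of the Meyer-It\^{o} expansion under the $C^2_{line}$ assumption in the unbounded variation case, particularly in handling simultaneous L\'evy jumps of $X$ and reflective jumps of $R_r^\pi$ at the boundary $\{0\}$, where Assumption~\ref{assump2a} and the bounded, a.e.-continuous density of $w'$ ensure that the second-derivative integral against the continuous quadratic variation is well defined and that the compensated jump sum is absolutely convergent.
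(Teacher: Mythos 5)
Your proposal is correct and follows essentially the same route as the paper, which omits the argument and simply refers to Lemma 5.3 of \cite{PerYam}: a (Meyer--)It\^{o} expansion of $e^{-qt}w(U_r^\pi(t))$ under localization, compensation of the Poissonian dividend jumps so that the first inequality in \eqref{HJB} can be applied at $l=\varrho^\pi(s)$, the bound $w'\leq\beta$ to absorb the capital-injection terms, and the lower bound $\inf_{x\geq 0}w(x)>-m$ to dispose of the terminal term as the localizing times tend to infinity. The only small correction is that Assumption \ref{assump2a} plays no role in this lemma—it is used later only to ensure $v_{a^*}\in C_{line}^2$—since the regularity of $w'$ you need here is already part of the hypothesis $w\in C_{line}^2$.
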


The proof of Theorem \ref{thm_optimal} will be divided into several auxiliary lemmas.%\\
\begin{lemma}\label{lemma_smoothness}
	The function $v_{a^*}$ belongs to $C_{line}^1$, it is concave and satisfies
	\[
	0 \leq v'_{a^*}(x) \leq \beta, \quad x \in {(0,\infty)}.
	\]
	%\green{[``$x\in(0,\infty)$'' in the proof but here ``$x \in [0,\infty)$''. please confirm which is better.]}
	In addition, if $X$ has paths of unbounded variation, then $v_{a^*}$ belongs to $C_{line}^2$.
\end{lemma}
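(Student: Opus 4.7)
The plan is to combine the stochastic representation of $v_{a^*}'$ from Proposition~\ref{Prop401} with a pathwise coupling of the periodically reflected process $U_r^{a^*}$. First, by Proposition~\ref{Prop401} together with \eqref{density1}, the function $v_{a^*}$ is continuously differentiable on $(0,\infty)$ with
\[
v_{a^*}'(x) \;=\; \E_x\!\left[e^{-qT_{a^*}^+};\,T_{a^*}^+<\tau_0^-\right] \;+\;\beta\,\E_x\!\left[e^{-q\tau_0^-};\,\tau_0^-<T_{a^*}^+\right], \qquad x\in(0,\infty).
\]
Both terms are non-negative, and since the two events $\{T_{a^*}^+<\tau_0^-\}$ and $\{\tau_0^-<T_{a^*}^+\}$ are disjoint and $\beta>1$,
\[
v_{a^*}'(x)\;\leq\;\beta\,\E_x\!\left[e^{-q(T_{a^*}^+\wedge \tau_0^-)}\right]\;\leq\;\beta.
\]
Integrating the bound $0\leq v_{a^*}'\leq \beta$ over $(0,\infty)$ together with the extension $v_{a^*}(x)=\beta x+v_{a^*}(0)$ for $x<0$ yields linear growth of $v_{a^*}$ on $\R$; continuity of $v_{a^*}$ at $0$ follows by a standard dominated convergence argument based on the right-continuity of the paths of $X$ and the admissibility estimates from Remark~\ref{Rem201}, placing $v_{a^*}\in C_{line}^1$.

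For the concavity, the task is to show that $v_{a^*}'$ is non-increasing on $(0,\infty)$. I rewrite the derivative via \eqref{fpt_markov} as in \eqref{density2}:
\[
v_{a^*}'(x) \;=\; \frac{\E_x\!\left[e^{-q\kappa_0^{a^*,-}(r)}\right] + (g(a^*)-1)\,\E_x\!\left[e^{-q\tau_0^-};\,\tau_0^-<T_{a^*}^+\right]}{\E_{a^*}\!\left[e^{-q\kappa_0^{a^*,-}(r)}\right]}.
\]
By Lemmas~\ref{lem_limit_g} and~\ref{lem_rc} together with the definition \eqref{cand_barrier}, either $a^*>0$ with $g(a^*)=1$ (by continuity of the non-increasing function $g$), or $a^*=0$ with $g(0)\leq 1$. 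In the former case the second term vanishes and $v_{a^*}'(x)$ is a positive constant multiple of $x\mapsto\E_x[e^{-q\kappa_0^{a^*,-}(r)}]$. I then show this function is non-increasing in $x$ by a pathwise coupling: starting $U_r^{a^*}$ from $x_1<x_2$ under the same driving $X$, the two trajectories remain ordered until they are simultaneously reset to $a^*$ at a common Poisson arrival, after which they coincide. Consequently $\kappa_0^{a^*,-}(r)$ under $\p_x$ is stochastically non-decreasing in $x$, which yields concavity and in particular recovers the smooth-fit identity $v_{a^*}'(a^*)=1$.

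The boundary case $a^*=0$ with $g(0)<1$, which arises only when $0$ is irregular for $(-\infty,0)$, is more delicate because the residual term in the representation of $v_{a^*}'$ does not vanish and the one-sided monotonicity argument above no longer suffices. Here I combine the same coupling with the inequality $g(0)\leq 1$ and the pathwise comparisons of Section~\ref{Sec3} applied to both $L^{(x)}$ and $R^{(x)}$ to control the combined sign of the derivative; this is the main technical obstacle, since the derivative is a linear combination of a non-decreasing and a non-increasing contribution. Finally, for the $C_{line}^2$ conclusion when $X$ has paths of unbounded variation, Assumption~\ref{assump2a} provides an a.e.-continuous, locally bounded Radon--Nikodym density for $x\mapsto\E_x[e^{-q\kappa_0^{a^*,-}(r)}]$; the analogous regularity for $x\mapsto\E_x[e^{-q\tau_0^-};\,\tau_0^-<T_{a^*}^+]$ is obtained by the same reasoning, and both are inherited by $v_{a^*}'$ through the representation above.
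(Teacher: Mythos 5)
Your treatment of the main case $a^*>0$ is essentially the paper's argument: by continuity of $g$ one has $g(a^*)=1$, the residual term in \eqref{density2} vanishes, and $v_{a^*}'(x)=\beta\,\E_x\bigl[e^{-q\kappa_0^{a^*,-}(r)}\bigr]$, from which the bounds, the monotonicity of the derivative (hence concavity), linear growth, and --- via Assumption \ref{assump2a} --- the $C_{line}^2$ claim all follow. Your direct derivation of $0\le v_{a^*}'\le\beta$ from \eqref{density1} is in fact a little cleaner than the paper's. Two side remarks: the paper's Lemma \ref{lem_limit_g} states ``non-decreasing'' but, as your reading correctly assumes, $g$ is non-increasing (otherwise $\lim_{a\to\infty}g(a)=0$ would force $g\equiv 0$); and your worry in the last paragraph about regularity of $x\mapsto\E_x\bigl[e^{-q\tau_0^-};\tau_0^-<T_{a^*}^+\bigr]$ for the $C_{line}^2$ statement is moot, since unbounded variation implies $0$ is regular for $(-\infty,0)$, hence $a^*>0$ by Remark \ref{remark_optimal}(1) and the derivative reduces to the single term covered by Assumption \ref{assump2a}.

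The genuine gap is the case $a^*=0$, which you explicitly leave as ``the main technical obstacle'' without resolving. Note first that what the later verification actually requires in this case is not concavity but the bound $v_0'\le 1$ (so that $l\mapsto l+v_0(x-l)$ is non-decreasing in Lemma \ref{lemma_HJB1}); this is obtained by a two-line estimate you do not give: in \eqref{density2} the coefficient $\beta\,\E_0\bigl[e^{-q\kappa_0^{0,-}(r)}\bigr]-1=g(0)-1$ is non-positive, so dropping that term and using that $x\mapsto\E_x\bigl[e^{-q\kappa_0^{0,-}(r)}\bigr]$ is non-increasing yields $v_0'(x)\le\E_x\bigl[e^{-q\kappa_0^{0,-}(r)}\bigr]\big/\E_0\bigl[e^{-q\kappa_0^{0,-}(r)}\bigr]\le 1$. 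By contrast, the monotonicity of $v_0'$ itself does not follow from the decomposition you propose: the numerator of \eqref{density2} is the sum of a non-increasing function and of $(g(0)-1)\,\E_x\bigl[e^{-q\tau_0^-};\tau_0^-<T_0^+\bigr]$, which is \emph{non-decreasing} when $g(0)<1$, so ``controlling the combined sign'' is not a matter of applying the coupling again --- some genuinely new input would be needed, and the paper itself establishes only the bound $0\le v_0'\le 1\le\beta$ in this case rather than concavity. You should either supply such an argument or, as the paper implicitly does, replace the concavity claim for $a^*=0$ by the inequality $v_0'\le 1$, which suffices for everything downstream.
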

\begin{proof}
	%\yellow{[Cleaned up the comments from the previous version.]\\}
	%\green{Made a slight modification to the proof, since I think we no longer need Assumption \ref{assump2a} because we can prove directly that $x \mapsto v'_{a^*}(x)$ is differentiable a.e. on $(0,\infty)$.}
	%\blue{[I guess Assumption \ref{assump2a} is necessary when we prove that $v_{a^\ast}$ belongs to $C_{line}^2$.]}\green{[I made the change again, we use Assumption \ref{assump2a}]}
	%\\
	First, we consider the case $a^\ast > 0$. Note that thanks to the continuity of the mapping $a \mapsto \beta \E_a\left[ e^{-q \kappa_0^{a,-}(r)} \right] $ and due to expression \eqref{density2}, when $a^\ast > 0$ we can rewrite $v'_{a^\ast}$ as
	\begin{equation}\label{density3}
		v'_{a^\ast}(x) = \beta \E_x\left[ e^{-q \kappa_0^{a^\ast,-}(r)} \right].
	\end{equation}
	We remark that we will use this expression of $v'_{a^\ast}$ throughout the rest of this section.\\
	Due to expression \eqref{density3}, %\blue{[I think \eqref{density3} is enough.]}\green{[OK]}
	%and thanks to Lemma \ref{lem_rc} and item (1) in Remark \ref{remark_optimal}  
	it follows that $0 \leq v'_{a^*}(x) \leq \beta$ and that $x \mapsto v'_{a^*}(x)$ is non-increasing, thus $v_{a^\ast}$ is concave and satisfies the linear growth condition. Since $v_{a^*}$ is continuously differentiable by Proposition \ref{Prop401}, $v_{a^\ast}$ belongs to $C_{line}^1$. In addition, {thanks to Assumption \ref{assump2a} } it follows that $v_{a^\ast}$ belongs to $C_{line}^2$ when $X$ has paths of unbounded variation. \\
	%\blue{[Here, I guess Assumption \ref{assump2a} is necessary.]}\green{[Modified this statement, now we mention again that it is thanks to Assumption \ref{assump2a}]}.\\
	Now, consider the case $a^\ast =0$, which occurs when the conditions from item (2) in Remark \ref{remark_optimal} are satisfied and $g(0)\leq 1$. Then, by expression \eqref{density2} we have, for $x \in (0,\infty)$,
	\begin{align}
		v'_0(x) &= \frac{\E_x \left[ e^{-q \kappa_0^{0,-}(r)} \right] + \E_x \left[ e^{-q \tau_0^-} ; \tau_0^{-} < T_0^{+} \right] \left( \beta \E_0 \left[ e^{-q \kappa_0^{0,-}(r)} \right] - 1 \right)}{ \E_0 \left[ e^{-q \kappa_0^{0,-}(r)} \right] }\\
		&\leq \frac{\E_x \left[ e^{-q \kappa_0^{0,-}(r)} \right]}{\E_0 \left[ e^{-q \kappa_0^{0,-}(r)} \right]} \leq 1,
		\label{inequality}
	\end{align}
	where in the first inequality we used the fact that $\beta \E_0 \left[ e^{-q \kappa_0^{0,-}(r)} \right] \leq 1$, and afterwards we used that $x\mapsto \E_x \left[ e^{-q \kappa_0^{0,-}(r)} \right]$ is non-increasing. On the other hand, using \eqref{density1} we obtain that $v_0'(x) \geq 0$ for $x \in (0,\infty)$. Finally, from Proposition \ref{Prop401} it follows that $v_0 $ belongs to $C_{line}^1$.
\end{proof}
%Another direct consequence of Lemma \ref{lemma_smooth_fit} is the following.
\begin{lemma}\label{lemma_HJB1}
%\green{[In the case $a^\ast =0$ of the proof of Lemma \ref{lemma_smoothness} we wrote $x \in (0,\infty)$ but this lemma is considered when $x \in [0, \infty)$. One of them has to be changed.]}
	For $a^* > 0$ we have
	\[
	\max_{0 \leq l \leq x} \lbrace l + v_{a^*}(x-l) - v_{a^*}(x) \rbrace = 
	\begin{cases}
		0, & \text{if } x \in [0,a^*) \\
		x-a^* + v_{a^*}(a^*) - v_{a^*}(x), & \text{if } x \in [a^*,\infty).
	\end{cases}
	\]
%	\yellow{[Do we need to mention case $a^\ast=0  $?]}
\end{lemma}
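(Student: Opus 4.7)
Since $v_{a^*}(x)$ is constant in $l$, maximizing $l + v_{a^*}(x-l) - v_{a^*}(x)$ over $l \in [0,x]$ is equivalent to maximizing $h(l) := l + v_{a^*}(x-l)$ over the same interval. By Lemma \ref{lemma_smoothness}, $v_{a^*}$ is concave and continuously differentiable on $(0,\infty)$, so $h$ is concave on $[0,x]$ and differentiable on $(0,x)$ with $h'(l) = 1 - v'_{a^*}(x-l)$. The plan is to identify the maximizer by a first-order analysis after establishing the smooth fit condition $v'_{a^*}(a^*)=1$.

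First I would record the smooth fit identity. By the definition \eqref{cand_barrier} of $a^*$, and since $a^* > 0$, the continuity of $g$ (Lemma \ref{lem_rc}) together with the monotonicity in Lemma \ref{lem_limit_g} force $g(a^*) = \beta \E_{a^*}[e^{-q\kappa_0^{a^*,-}(r)}] = 1$. Combining with \eqref{density3}, this gives $v'_{a^*}(a^*)=1$. Since $v_{a^*}$ is concave on $(0,\infty)$, $v'_{a^*}$ is non-increasing, so $v'_{a^*}(y) \geq 1$ for $y \in (0,a^*]$ and $v'_{a^*}(y) \leq 1$ for $y \in [a^*, \infty)$.

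Next I would split into the two cases. If $x \in [0, a^*)$, then $x-l \in [0,x] \subseteq [0,a^*]$ for every $l \in [0,x]$, hence $v'_{a^*}(x-l) \geq 1$ on $(0,x)$, which gives $h'(l) \leq 0$ on $(0,x)$. Thus $h$ attains its maximum at $l=0$, producing the value $h(0) - v_{a^*}(x) = 0$. If $x \in [a^*, \infty)$, then $h'(l) \geq 0$ for $l \in (0, x-a^*)$ and $h'(l) \leq 0$ for $l \in (x-a^*, x)$, using the sign analysis of $v'_{a^*}$ above and below $a^*$. Therefore $h$ is maximised at $l^* = x - a^*$, which gives the value $l^* + v_{a^*}(x-l^*) - v_{a^*}(x) = x - a^* + v_{a^*}(a^*) - v_{a^*}(x)$, exactly as claimed.

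The only delicate point is the smooth-fit identity $v'_{a^*}(a^*)=1$; once that is in hand, concavity of $v_{a^*}$ turns the optimisation into a transparent unimodal problem. No boundary difficulties arise at $l=0$ or $l=x$ since $v_{a^*} \in C^1_{\text{line}}$ and we only rely on one-sided monotonicity of $h$. Hence I expect the whole argument to be short, with the existence and value of the smooth-fit barrier — established in Section \ref{sec:candidate} — carrying essentially all the weight.
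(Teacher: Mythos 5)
Your proof is correct and follows essentially the same route as the paper: both reduce to the sign analysis of $h_x'(l)=1-v'_{a^*}(x-l)$ using the representation \eqref{density3} and the monotonicity of $y\mapsto v'_{a^*}(y)$, with the maximiser at $l=0$ or $l=x-a^*$ accordingly. The only difference is presentational — you make the smooth-fit identity $g(a^*)=v'_{a^*}(a^*)=1$ explicit (correctly, via continuity and monotonicity of $g$), whereas the paper leaves it implicit in its citation of \eqref{density3}.
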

%\green{[While the proof is straightforward, I included it in this version of the paper, maybe we can omit it later.]}
%\blue{[Ok, thanks!]}
\begin{proof}
	Let $h_x(l) = l + v_{a^*}(x-l) - v_{a^*}(x)$, then $h_x$ has a derivative given by $h_x'(l) = 1 - v'_{a^*}(x-l)$.
	
		If $x \in [0,a^*)$, then thanks to expression \eqref{density3} we have $h_x'(l) \leq 0$ for all $l \in (0,x)$%\green{[Since the derivative of $v_{a^\ast}$ was obtained only in $(0,\infty)$, it would be safer to set $x\in(0, l)$. ]}%\green{[Is it ok when $l=x$?]}
		, hence $h_x(l) \leq h_x(0) $ for $l \in [0,x]$, where $h_x(0) =0$.
		
	On the other hand, if $x  \in  [a^*,\infty)$, then due to expression \eqref{density3} we have that $l^* = x - a^*$ is a critical point of $h_x$, and it is a local maximum, thus $h_x(l)\leq h_x(x - a^*) = x-a^* + v_{a^*}(a^*) - v_{a^*}(x)$ for all $l\in[0,x]$.
	
	For $a^*=0$, by Remark \ref{remark_optimal}(2) together with \eqref{inequality}, %\green{[only \eqref{inequality} is enough?]}, 
	we have that $h'_x(l)=1 - v'_{a^*}(x-l)\geq0$. Hence, $h_x(l)\leq h_x(x)=x + v_{0}(0) - v_{0}(x)$, for all $l\in[0,x]$.
	
%	If $x \in [0,a^*)$, then thanks to expression \eqref{density3} \blue{[isn't expression \eqref{density3} only valid for $a^*>0$? Maybe we need to consider both cases separately?]} we have $g_x'(l) \leq 0$ for all $l \in [0,x]$, hence $g_x(l) \leq h_x(0) $ for $l \in [0,x]$, where $h_x(0) =0$.\\
%	On the other hand, if $x  \in  [a^*,\infty)$, then due to expression \eqref{density3} we have that $l^* = x - a^*$ is a critical point of $h_x$, and it is a local maximum, thus $h(l)\leq h_x(x - a^*) = x-a^* + v_{a^*}(a^*) - v_{a^*}(x)$ for all $l\in[0,x]$.
\end{proof}
%\green{[Added this, and moved the proof of the Lemma to the Appendix.]} 
The following Lemma is the final component for the proof of Theorem \ref{thm_optimal}, we defer its proof to Appendix \ref{App003}.
\begin{lemma}\label{lemma_HJB2}
%\yellow{[I agree, the statement is true for $x \in (0,\infty)$.]} 
For $x \in {(0,\infty)}$, we have $ (\cL - q) v_{a^*}(x) + r \max_{0 \leq l \leq x} \lbrace l + v_{a^*}(x-l) - v_{a^*}(x) \rbrace = 0 $.
\end{lemma}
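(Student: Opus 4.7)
The plan is to apply the Itô--Meyer formula to the discounted value $e^{-qt}v_{a^*}(U^{0,a^*}_r(t))$ and identify the resulting decomposition with the definition of $v_{a^*}$. The requisite regularity is supplied by Lemma \ref{lemma_smoothness} together with Assumption \ref{assump2a}: $v_{a^*}\in C^1_{line}$ in the bounded variation case and $v_{a^*}\in C^2_{line}$ in the unbounded variation case, so that the Itô--Meyer formula is legitimately applicable. To cleanly absorb the capital-injection cost, I first extend $v_{a^*}$ to all of $\R$ via $v_{a^*}(y)=v_{a^*}(0)+\beta y$ for $y<0$, so that the Skorokhod-reflection jumps enter the formula with their correct cost through the Lévy jump integral.

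Using the decomposition $U^{0,a^*}_r(t)=x+X(t)-L^{0,a^*}_r(t)+R^{0,a^*}_r(t)$, where $L^{0,a^*}_r$ jumps only at Poisson times $T_k$ (with $\Delta L^{0,a^*}_r(T_k)=(U^{0,a^*}_r(T_k-)-a^*)^{+}$) and $R^{0,a^*}_r=R^{c}+R^{d}$ with $R^{c}$ supported on $\{U^{0,a^*}_r=0\}$ and $R^{d}$ supported on those $X$-jump times that would push $U^{0,a^*}_r$ below $0$, the Itô--Meyer formula up to a localizing time $\theta_n$, after taking $\E_x$ so that the local-martingale term vanishes, yields
\begin{align*}
\E_x\bigl[e^{-q(t\wedge\theta_n)}v_{a^*}(U^{0,a^*}_r(t\wedge\theta_n))\bigr]-v_{a^*}(x)
&=\E_x\Bigl[\int_{0}^{t\wedge\theta_n}e^{-qs}(\cL-q)v_{a^*}(U^{0,a^*}_r(s-))\,\diff s\Bigr]\\
&\quad-\beta\,\E_x\Bigl[\int_{0}^{t\wedge\theta_n}e^{-qs}\,\diff R^{0,a^*}_r(s)\Bigr]\\
&\quad+\E_x\Bigl[\sum_{T_k\leq t\wedge\theta_n}e^{-qT_k}\bigl\{v_{a^*}(U^{0,a^*}_r(T_k))-v_{a^*}(U^{0,a^*}_r(T_k-))\bigr\}\Bigr].
\end{align*}
The $-\beta\,\diff R^{0,a^*}_r$ term is the combined contribution of the local-time reflection $v'_{a^*}(0+)\,\diff R^{c}$ (where $v'_{a^*}(0+)=\beta$ when $0$ is regular for $(-\infty,0)$, by \eqref{density3}) and of the jump reflections, which the linear extension of $v_{a^*}$ captures exactly; in the irregular case $R^{c}\equiv 0$ and only the jump contribution is present.

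For the Poisson-time sum, Lemma \ref{lemma_HJB1} gives
\[
\Delta L^{0,a^*}_r(T_k)+v_{a^*}(U^{0,a^*}_r(T_k))-v_{a^*}(U^{0,a^*}_r(T_k-))=M(U^{0,a^*}_r(T_k-)),
\]
where $M(y):=\max_{0\leq l\leq y}\{l+v_{a^*}(y-l)-v_{a^*}(y)\}$. Since $\mathcal{T}_r$ is independent of $X$, this sum compensates to $\int_{0}^{\cdot}e^{-qs}r M(U^{0,a^*}_r(s-))\,\diff s$ plus a martingale. Letting $t\to\infty$ and $n\to\infty$---justified by the linear growth of $v_{a^*}$, admissibility of $\pi^{0,a^*}$, and \eqref{r_adm}---and using $v_{a^*}(x)=\E_x[\int_0^\infty e^{-qs}\,\diff L^{0,a^*}_r(s)-\beta\int_0^\infty e^{-qs}\,\diff R^{0,a^*}_r(s)]$, the terms cancel to produce
\[
\E_x\Bigl[\int_{0}^{\infty}e^{-qs}H(U^{0,a^*}_r(s-))\,\diff s\Bigr]=0,\qquad H(y):=(\cL-q)v_{a^*}(y)+r M(y).
\]
Since $H$ is continuous on $(0,\infty)$ by the regularity of $v_{a^*}$ and the explicit form of $M$ from Lemma \ref{lemma_HJB1}, and the occupation measure of $U^{0,a^*}_r$ on $(0,\infty)$ has full support (as $x$ varies), we conclude that $H(x)=0$ for every $x\in(0,\infty)$, which is the desired identity.

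The hardest part is the careful bookkeeping of the reflection term at $0$: confirming that the linear extension of $v_{a^*}$ together with the local-time contribution matches $-\beta\,\diff R^{0,a^*}_r$ exactly, which forces a case distinction between the regular and irregular situations (and requires the smooth-fit identity $v'_{a^*}(0+)=\beta$ in the regular case). A secondary technical point is the justification of the exchange of the $t\to\infty$ limit with $\E_x$, which relies on admissibility of the periodic-classical barrier strategy together with the linear growth of $v_{a^*}$ established in Lemma \ref{lemma_smoothness}.
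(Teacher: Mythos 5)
Your route (It\^o--Meyer applied directly to $e^{-qt}v_{a^*}(U^{0,a^*}_r(t))$) is genuinely different from the paper's: the paper conditions on the first Poisson arrival $T_1$ and the ruin time $\tau_0^-$, integrates out the Poisson clock to get $v_{a^*}(x)=r\E_x[\int_0^{\tau_0^-}e^{-(r+q)s}h(X(s))\,\diff s]+\cdots$ with $h(x)=(x-a^*)_++v_{a^*}(x\wedge a^*)$, and then runs the martingale/It\^o--Meyer argument on the \emph{uncontrolled} process $X$ killed at $\tau_0^-$. That reduction is precisely what lets the paper avoid all of the reflection bookkeeping at $0$ that you correctly identify as the hardest part of your version. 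Your approach can be made to work, but as written it has two concrete problems.

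First, a sign error that breaks the key cancellation: capital injections push $U^{0,a^*}_r$ \emph{up} and $v_{a^*}$ is nondecreasing with the linear extension $v_{a^*}(y)=v_{a^*}(0)+\beta y$ for $y<0$, so both the local-time term $v_{a^*}'(0+)\,\diff R^{c}$ and the jump-reflection correction $v_{a^*}(0)-v_{a^*}(U(s-)+\Delta X(s))=\beta\,\Delta R^{d}(s)$ enter the It\^o--Meyer expansion with a \emph{plus} sign, i.e.\ the second line of your display should read $+\beta\,\E_x[\int e^{-qs}\diff R^{0,a^*}_r(s)]$. With your $-\beta$ the subsequent matching against $v_{a^*}(x)=\E_x[\int e^{-qs}\diff L-\beta\int e^{-qs}\diff R]$ leaves a residual $2\beta\,\E_x[\int e^{-qs}\diff R]$ rather than zero. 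Second, the concluding step is not valid as stated: from $\E_x[\int_0^\infty e^{-qs}H(U^{0,a^*}_r(s-))\,\diff s]=0$ for all $x$ and ``the occupation measure has full support'' you cannot conclude $H\equiv0$, since a sign-changing continuous function can integrate to zero against every such measure. The standard repair is to keep the finite-horizon identity: writing $F(x):=\E_x[\int_0^\infty e^{-qs}H(U(s))\,\diff s]\equiv0$ and applying the Markov property gives $\E_x[\int_0^{t}e^{-qs}H(U(s))\,\diff s]=0$ for every $t>0$, and dividing by $t$ and letting $t\downarrow0$ yields $H(x)=0$ by right-continuity of the paths and continuity of $H$ at $x$ (this is essentially the localization argument the paper borrows from \cite[Lemma 5.7]{Nob2019}). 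Finally, the claim that the reflection terms match ``exactly'' still needs the case analysis you only sketch: $v'_{a^*}(0+)=\beta$ via \eqref{density3} requires $a^*>0$ and regularity of $0$ for $(-\infty,0)$, and in the irregular case one must actually verify $R^{c}\equiv0$ so that only the jump contribution (handled by the linear extension) survives.
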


We are now ready to prove Theorem \ref{thm_optimal}
\begin{proof}[Proof of Theorem \ref{thm_optimal}]
%\yellow{[Thanks for your comments. Indeed, in this proof Lemma \ref{lemma_HJB1} is not needed, same for the concavity.]\\}
By Lemma \ref{lemma_smoothness}, we have that $v_{a^\ast}\in C^1_{line}$ (resp. $v_{a^\ast}\in C^2_{line}$) when $X$ has paths of bounded variation (resp. of unbounded variation). In addition, thanks to Lemma \ref{lemma_HJB2}, {$v_{a^\ast}$} satisfies the first two variational inequalities from \eqref{HJB}. Moreover, as a consequence of Lemma \ref{lemma_smoothness} we also know that $v_{a^\ast}$ is non-decreasing on $(0,\infty)$, which implies that $ \inf_{x \in (0,\infty)}v_{a^\ast}(x) \geq v_{a^\ast}(0) > -\infty $; hence $v_{a^\ast}$ fulfils all the conditions of Lemma \ref{lem_verification}, which gives that $ v_{a^\ast} \geq v $ on $(0,\infty)$. On the other hand, since the periodic-classical reflection strategy $\pi^{0,a^*}$ is admissible we have $  v_{a^*} \leq v$ on $(0,\infty)$. The proof is complete.
\end{proof}

\newpage
	\section*{Acknowledgements}
	Funding in support of this work was provided by a CRM-ISM Postdoctoral Fellowship from the Centre de recherches math\'emathiques (CRM) and the Institut des sciences math\'emathiques (ISM).
Kei Noba was supported by JSPS KAKENHI grant JP21K13807 and JSPS
Open Partnership Joint Research Projects grant JPJSBP120209921. In addition,
Kei Noba stayed at Centro de Investigaci\'on en Matem\'aticas in Mexico as a JSPS Overseas Research
Fellow and received support for the research environment there.

\appendix

\section{The proof of Lemma \ref{lem_smooth}} \label{App002}
%\yellow{[Rewrote the argument after \eqref{sn3}]\\}
	Throughout this proof, we write $H_q(x) = \E_{x}\left[ e^{-q \kappa_0^{a,-}(r)}  \right]$. %\green{[I think it's necessary to be consistent about whether to write "$; \kappa_0^{a,-}(r)<\infty$" or not]}\yellow{[To simplify the notation, I have deleted the "$; \kappa_0^{a,-}(r)<\infty$"]}. 
	Assume that $X$ satisfies $\nu(0,\infty) < \infty$. Then we can write the process $X$ as
	\[
	X(t) = X^{\text{SN}}(t) + \sum_{n=1}^{N_\nu(t)} J_n,
	\]
where $X^{\text{SN}}$ is a spectrally negative \lev process with paths of unbounded variation, $N_\nu$ is an homogeneous Poisson process with intensity $\nu(0,\infty)$, and $\{J_n\}_{n \geq 1}$ is a sequence of i.i.d. random variables with distribution $\nu(\cdot \cap (0,\infty))/\nu(0,\infty)$.\\
Let $U_r^{\text{SN},a}$ be the spectrally negative \lev process with periodic reflection above at $a$, and we define $\kappa_0^{\text{SN},a} := \inf\{t>0:  U_r^{\text{SN},a}(t) <0\}$ and
\begin{equation}\label{sn1}
H_\theta^{\text{SN}}(x) := \E_x \left[  e^{-\theta \kappa_0^{\text{SN},a} } \right], \quad \theta >0, x \in (0,\infty).
\end{equation}
We have that $ H_\theta^{\text{SN}} $ is continuously differentiable (see e.g. equation (3.5) in \cite{NobPerYamYan}). %\yellow{[Agree, the second sentence seems repetitive. Maybe we can just say ``We have that $ H_\theta^{\text{SN}} $ is continuously differentiable (see e.g. equation (3.5) in \cite{NobPerYamYan})."]}\blue{[Agreed]}
\\ 
%In addition, from Proposition 5.3 in \cite{MMNP} it follows that for any locally bounded, measurable function $f$ the mapping $x \mapsto \E_x\left[ \int_0^{\kappa_0^{\text{SN},a}} e^{-\theta t} f(U_r^{\text{SN},a}(t)) \diff t \right]$ has a locally bounded density w.r.t. the Lebesgue measure, hence it can be rewritten as
%\begin{equation}\label{sn_dens}
%	\E_x\left[ \int_0^{\kappa_0^{\text{SN},a}} e^{-\theta t} f(U_r^{\text{SN},a}(t)) \diff t \right] = \int_0^{\infty} f(y) G_\theta^{\text{SN}}(y;x,a) \diff y, \quad x \in [0,\infty),
%\end{equation}
%where the function $G^{\text{SN}}(y;x,a)$ is given in expression (5.7) in \cite{MMNP}. 
%\yellow{[To be honest, I think the referee might stop reading here, but it's an appendix, so is it really necessary to write in too much detail? It's difficult to say which is better. ]} \green{[Since it's an appendix, maybe it is not necessary to give too much detail in the final version. For the moment, we can write all the justifications for ourselves.]}\\
Let $T^{N_\nu}$ be the first jump time of $N_\nu$, which is exponentially distributed with rate $\nu(0,\infty)$. Thanks to the strong Markov property we have %\green{[Kei, I don't think the $\nu(0,\infty)$ appears in from of the integral, since $T^{N_{\nu}}\sim \text{exp}(\nu(0,\infty))$ and the distribution of $J_1$ is $\frac{\nu(\cdot \cap (0,\infty))}{\nu(0,\infty)}$, so that the $\nu(0,\infty)$ cancel.]}
\begin{align}
	H_q(x) &= \E_x\left[ e^{-q \kappa_0^{\text{SN},a}}; \kappa_0^{\text{SN},a} < T^{N_\nu} \right] + \E_x\left[ e^{-q T^{N_\nu}} H_q(U_r^{\text{SN},a}(T^{N_\nu}) + J_1) ; T^{N_\nu} < \kappa_0^{\text{SN},a} \right] \label{sn3} \\
		&= H^{\text{SN}}_{\hat{q}}(x) + \E_x\left[ e^{-q T^{N_\nu}} H_q(U_r^{\text{SN},a}(T^{N_\nu}) + J_1) ; T^{N_\nu} < \kappa_0^{\text{SN},a} \right],\label{sn2}
%	&= H^{\text{SN}}_{\hat{q}}(x) + \int_0^{\infty} \int_{(0,\infty)} H^{\text{SN}}_q(y+u) G_{\hat{q}}^{\text{SN}}(y;x,a) \nu(\diff u) \diff y,\label{sn2}
\end{align}
where $\hat{q} = q + \nu(0,\infty)$. %\green{[Where is the definition of  $T^{N_\nu}$?]}\yellow{[I have added the definition at the start of the paragraph.]}
\\
As pointed out in the discussion following \eqref{sn1}, we have that $ H^{\text{SN}}_{\hat{q}} $ has a Radon-Nykodym density w.r.t. the Lebesgue measure, denoted by $ H^{\text{SN} \prime}_{\hat{q}} $, which is bounded on {compact sets of} $(0,\infty)$.\\
It remains to analyse the term $ \E_x\left[ e^{-q T^{N_\nu}} H_q(U_r^{\text{SN},a}(T^{N_\nu}) + J_1) ; T^{N_\nu} < \kappa_0^{\text{SN},a} \right] $. By taking the expectation w.r.t. $T^{N_\nu}$ and $J_1$ we have
\begin{align*}
	\E_x\left[ e^{-q T^{N_\nu}} H_q(U_r^{\text{SN},a}(T^{N_\nu}) + J_1) ; T^{N_\nu} < \kappa_0^{\text{SN},a} \right] & = \E_x \left[ \int_0^{\kappa_0^{\text{SN},a}} e^{- \hat{q} t} \left( \int_{{(0, \infty)}} H_q(U_r^{\text{SN},a}(t) + u) \nu(\diff u) \right) \diff t \right].
\end{align*}
%\green{[Shouldn't the last term be multiplied by $\nu(0, \infty)$?]}\yellow{[I don't think so, because $J_1$ is distributed as $\frac{\nu(\cdot \cap (0,\infty))}{\nu(0,\infty)}$ and $T^{N_\nu}$ is exponential with rate $\nu(0,\infty)$ so when you take expectations the terms $\nu(0,\infty)$ cancel out.]}
To simplify the notation, we write $\mathcal{H}(y):= \int_{{(0, \infty)}} H_q(y + u) \nu(\diff u)$. Then, it is a direct consequence of Proposition 5.3 in \cite{MMNP} that we can write
\begin{align*}
	\E_x \left[ \int_0^{\kappa_0^{\text{SN},a}} e^{- \hat{q} t} \mathcal{H}(U_r^{\text{SN},a}(t)) \diff t \right] & = K_1 W_a^{(\hat{q},r)}(x) - \rho_a^{(\hat{q},r)}(x; \mathcal{H}) - K_2 \bar{W}^{(\hat{q}+r)}(x-a)\\
	& \quad  - \int_0^{x-a} \mathcal{H}(y+a) W^{(\hat{q}+r)}(x-a-y) \diff y,
\end{align*}
where $K_1, K_2$ are constants, and $W_a^{(\hat{q},r)}(x),\, \bar{W}^{(\hat{q}+r)}, \, \rho_a^{(\hat{q},r)}(x; \mathcal{H})$ are defined in terms of the scale functions $W^{\hat{q}}, \, W^{(\hat{q}+r)}$ (see equations (2.8) and (2.11) in \cite{MMNP}). Since $X^{SN}$ has paths of unbounded variation, the scale function is $C^1$ on $(0,\infty)$, so it follows that $x \mapsto \E_x \left[ \int_0^{\kappa_0^{\text{SN},a}} e^{- \hat{q} t} \mathcal{H}(U_r^{\text{SN},a}(t)) \diff t \right]$ is $C^1$ on $(0,\infty)$ as well. As a consequence, its first derivative is a version of the Radon-Nykodym density on w.r.t. to the Lebesgue measure, and since it is continuous, it is also bounded on {compact sets of} $(0,\infty)$.

It now follows easily that $H_q \in C^1(0,\infty)$, thus it has a Radon-Nykodym density w.r.t. to the Lebesgue measure, and we can take a version of it that coincides with $H_q'$, which is bounded on {compact sets of} $(0,\infty)$.\\
%\green{[The statement of the lemma includes the case $\nu(-\infty,0)$, so I think it's necessary to write about this case as well. I think it would be sufficient to simply state that it can be done in a similar way, but I think it is necessary to confirm that it can indeed be proved similarly.]}
%\yellow{[Added this.]} 
%The case where $\nu(-\infty,0) < \infty$ is dealt with in a similar way.\\
%\yellow{[Here are the calculations for the case $\nu(-\infty,0) < \infty$, the argument is almost the same but the expressions for the resolvent density are more convoluted. We can omit them in the final version.]}\green{[I agree. It looks correct and as you said we can omit it. Thanks Dante.]}
{The case where $\nu(-\infty,0) < \infty$ is handled similarly, using the corresponding fluctuation identities found in Corollary 3.1(ii) of \cite{AvrPerYam} and Lemma 3.4 of \cite{BoWangYan}.} 

\section{Proof of Lemma \ref{lemma_HJB2}}\label{App003}
%\green{[Dante, can you change the numbering of the chapters to ``B'' instead of ``2''?]}
%\yellow{[Fixed the numbering and also cleaned up the comments.]}
\begin{proof}
	%\green{[Should we move this proof to the Appendix?]}
	%\blue{[I agree.]}
	First, due to Lemma \ref{lemma_HJB1} we have $ \max_{0 \leq l \leq x} \lbrace l + v_{a^*}(x-l) - v_{a^*}(x) \rbrace = (x-a^*)_+ + v_{a^*}(x\wedge a^*) - v_{a^*}(x) $ for $x \in [0,\infty)$, so it suffices to show that $ (\cL - (r+q)) v_{a^*}(x) + r h(x)  = 0 $, where $h(x):= (x-a^*)_+ + v_{a^*}(x\wedge a^*)$.\\
	Let $T_1$ %\green{[$T(1)$ is better? because we defined it in Section \ref{Sec202}.]} 
	be the first arrival time of the Poisson process {$N_r$}, then since $\lbrace U_r^{0,a^*}(t) : t < T_1\wedge \tau_0^{-} \rbrace$ is equal in law to $\lbrace X(t) : t < T_1\wedge \tau_0^{-} \rbrace$, and due to the Strong Markov property we have
	\begin{align}
		v_{a^*}(x) &= \E_x\left[ e^{-q T_1} h(X(T_1)) ; T_1 < \tau_0^-\right] + \beta \E_x\left[ e^{-q \tau_0^{-}} X(\tau_0^{-}) ; \tau_0^{-} < T_1 \right] \notag\\&+ v_{a^*}(0) \E_x\left[ e^{-q \tau_0^{-}} ; \tau_0^{-} < T_1 \right] \notag \\
		&= r \E_x \left[ \int_0^{\tau_0^{-}} e^{-(r+q)s} h(X(s)) \diff s \right] + \beta \E_x\left[ e^{-(r+q) \tau_0^{-}} X(\tau_0^{-}) ; \tau_0^{-} < \infty \right] \notag\\&+ v_{a^*}(0) \E_x\left[ e^{-(r+q) \tau_0^{-}} ; \tau_0^{-} < \infty \right].\label{aux1}
	\end{align}
	Now, the strong Markov property gives, for $t \geq 0$:
	\begin{align}
		\E_x \Bigg[ \int_{0}^{\tau_0^{-}}& e^{-(r+q)s} h(X(s)) \diff s  \Bigg| \cF_t \Bigg] \notag\\&= \Ind_{\lbrace \tau_0^{-} \leq t \rbrace} \int_{0}^{\tau_0^{-}} e^{-(r+q)s} h(X(s)) \diff s + \E_x \left[ \Ind_{\lbrace \tau_0^{-} > t \rbrace} \int_{0}^{\tau_0^{-}} e^{-(r+q)s} h(X(s)) \diff s  \Bigg| \cF_t \right] \notag \\
		&=  \int_{0}^{t \wedge \tau_0^{-}} e^{-(r+q)s} h(X(s)) \diff s + e^{-(r+q)t} \Ind_{\lbrace \tau_0^{-} > t \rbrace} \E_{X(t)} \left[ \int_{0}^{\tau_0^{-}} e^{-(r+q)s} h(X(s)) \diff s \right] \notag \\
		&= \int_{0}^{t \wedge \tau_0^{-}} e^{-(r+q)s} h(X(s)) \diff s + e^{-(r+q)(t\wedge \tau_0^{-})}  \E_{X(t \wedge \tau_0^{-})} \left[ \int_{0}^{\tau_0^{-}} e^{-(r+q)s} h(X(s)) \diff s \right]. \label{aux2}
	\end{align}
	The previous identity implies that the right-hand side of \eqref{aux2} is a martingale w.r.t. $\p_x$. \par By a similar computation we obtain that the processes $\left\{e^{-(r+q)t\wedge \tau_0^{-}} \E_{X(t \wedge \tau_0^{-})} \left[ e^{-(r+q)\tau_0^{-}} ; \tau_0^{-} < \infty \right]\right\}_{t\geq0}$ and $\left\{e^{-(r+q)t\wedge \tau_0^{-}} \E_{X(t \wedge \tau_0^{-})} \left[ e^{-(r+q)\tau_0^{-}} X(\tau_0^{-}) ; \tau_0^{-} < \infty \right]\right\}_{t\geq0}$, are also martingales w.r.t. to $\p_x$. It follows that
	\begin{equation}
		r \int_{0}^{t \wedge \tau_0^{-}} e^{-(r+q)s} h(X(s)) \diff s + e^{-(r+q)t\wedge \tau_0^{-}} v_{a^*}(X(t\wedge \tau_0^{-})),\qquad t\geq0,
	\end{equation}
	is a martingale w.r.t. to $\p_x$.\\
	Due to Lemma \ref{lemma_smoothness}, $v_{a^*}$ is smooth enough, hence by an application of the It\^{o}--Meyer formula, we have
	\begin{align}
		r \int_{0}^{t \wedge \tau_0^{-}} e^{-(r+q)s} h(X(s)) \diff s + & e^{-(r+q)t\wedge \tau_0^{-}}  v_{a^*}(X(t\wedge \tau_0^{-})) - v_{a^*}(x) \\ 
		& = \int_{0}^{t \wedge \tau_0^{-}} e^{-(r+q)s} \left( (\cL - q - r) v_{a^{*}}(X(s)) + r h(X(s)) \right) \diff s + M(t\wedge \tau_0^{-}), \label{aux3}
	\end{align}
	where $M(t\wedge \tau_0^{-})$ is a local martingale w.r.t. $\p_x$.\\
		Since the left-hand side of \eqref{aux3} and $M(t\wedge \tau_0^{-})$ are local martingales, and because $v_{a^\ast} \in C^1$ due to Lemma\ref{lemma_smoothness}, then following a similar argument to that in the proof \cite[(12)]{BifKyp2010} and \cite[Lemma 5.7]{Nob2019}, we have
%	Since the left-hand side of \eqref{aux3} and $M(t\wedge \tau_0^{-})$ are local martingales, and because $v_{a^\ast} \in C^1$ due to Lemmas \ref{lem_rc} \blue{[Is Lemma \ref{lem_rc} needed?]}\green{[is not needed.]} and \ref{lemma_smoothness}, \blue{then following a similar argument to that in the proof} \cite[(12)]{BifKyp2010} and \cite[Lemma 5.7]{Nob2019}, we have
	\begin{equation}\label{aux4}
	(\cL - q - r) v_{a^{*}}(x) + r h(x) =0, \quad x \in (0,\infty),
	\end{equation}
	when $X$ has paths of bounded variation. In addition, when $X$ has paths of unbounded variation, we use Assumption \ref{assump2a} and the argument of \cite[Lemma 5.7]{Nob2019} in order to take a version of the density of $v_{a^\ast}'$, denoted by $v_{a^\ast}''$, so that it is continuous and satisfies \eqref{aux4} on $(0,\infty)$. The proof is complete.
%	Finally, since $v_{a^\ast} \in C^1$ due to Lemmas \ref{lem_rc} and \ref{lemma_smoothness}, then the map $x\mapsto (\cL - q - r) v_{a^{*}}(x) + r h(x) - \frac{1}{2}\sigma^2 v_{a^\ast}''(x)$ is continuous almost everywhere on $(0,\infty)$. Thus, \eqref{aux4} holds on $(0,\infty)$. The proof is complete.
%	\blue{[It seems to me that it would be better to add a brief explanation as to why the proof is complete. ]}\green{[How about this?]}
%	Since the left-hand side of \eqref{aux3} is a local martingale, as well as $M(t\wedge \tau_0^{-})$, then is follows that the bounded variation term, i.e., $\int_{0}^{t \wedge \tau_0^{-}} e^{-(r+q)s} \left( (\cL - q - r) v_{a^{*}}(X(s)) + r h(X(s)) \right) \diff s$ must be equal to 0 \blue{[I thought this only holds (without any more arguments) if the martingale is continuous... Is this true?]}. This implies that $ (\cL - q - r) v_{a^{*}}(x) + r h(x) =0 $ for all $x\geq 0$. The proof is complete. \blue{[I think more details are needed..]}
\end{proof}


\begin{thebibliography}{99}
		\bibitem{ACT2011} \sc Albrecher, H., Cheung, E.C.K., and Thonhauser, S. \rm Randomized observation periods for the compound Poisson risk model: Dividends {\it ASTIN Bulletin} {\bf 42}(2), 645--672, (2011).
		\bibitem{ACT2013} \sc Albrecher, H., Cheung, E.C.K., and Thonhauser, S. \rm Randomized observation periods for the compound Poisson risk model: the discounted penalty function {\it Scandinavian Actuarial Journal} {\bf 2013}(6), 424--452, (2013).
		\bibitem{AGS2011} \sc Albrecher, H., Gerber, H.U., and Shiu, E.S.W. \rm The optimal dividend barrier in the Gamma-Omega model. {\it Eur. Act. J.} {\bf 1}, 43--55, (2011).
		\bibitem{ACWW2013} \sc Avanzi, B., Cheung, E.C.K., Wong, B. and Woo, J.-K. \rm On a periodic dividend barrier strategy in the dual model with continuous monitoring of solvency. {\it Insurance: Mathematics and Economics}, {\bf 52}(1), 98--113, (2013).
		\bibitem{AvrPalPis2007}\sc Avram, F., Palmowski, Z., and Pistorius, M.R. \rm  On the optimal dividend problem for a spectrally negative L\'evy process. {\it Ann. Appl. Probab.} {\bf 17}, 156-180, (2007).
		\bibitem{AvrPerYam} \sc Avram, F., P\'erez, J.L., and Yamazaki, K. \rm Spectrally negative L\'evy processes with Parisian reflection below and classical reflection above. {\it Stoch. Proc. Appl.}, {\bf 128}, 255--290, (2018).
		%\bibitem{Ber1996} \sc Bertoin, J. {\it L\'evy processes} \rm Cambridge Tracts in Mathematics, (1997).
		\bibitem{BifKyp2010} \sc Biffis, E., and Kyprianou, A.E. \rm A note on scale functions and the time value of ruin for L\'evy insurance risk processes. {\it Insurance Math. Econom. }, {\bf 46} (1), 85--91, (2010).
		\bibitem{BoWangYan} \sc Bo, L., Wang, W., and Yan, K. \rm De Finetti's Poissonian Dividend Control Problem under Spectrally Positive Markov Additive Process. {\it ArXiv: 2311.04555}, (2023).
		%\bibitem{CZ2016} Cheung, E.C.K., and Zhang, Z. \rm The Markov additive risk process under an Erlangized dividend barrier strategy. {\it Methodology and Computing in Applied Probability} {\bf 18}(2), 275--306, (2016).
		\bibitem{CZ2018a} \sc Cheung, E.C.K., and Zhang, Z. \rm Periodic threshold-type dividend strategy in the compound Poisson risk model. {\it Scandinavian Actuarial Journal} {\bf 2019}(1), 1--31, (2018).
		\bibitem{DW2002} \sc Dupuis, P. and Wang, H. \rm Optimal stopping with random intervention times. {\it Adv. Appl. Probab.} {\bf 34}(1), 141--157, (2002).
		%\bibitem{GemHor1980} \sc Geman, D. and Horowitz, J. \rm Occupation densities. {\it Ann. Probab.} {\bf 8} (1), 1--67, (1980).
		%\bibitem{HirLem2001} \sc Hiriart-Urruty, J.B., and Lemar\'echal, C. {\it Fundamentals of convex analysis} \rm Grundlehren Text Editions, Springer- Verlag, Berlin, (2001).
		\bibitem{Kyp2014} \sc Kyprianou, A.E. {\it Fluctuations of L\'evy processes with applications. 2nd Edition} \rm Springer, Heidelberg, (2014).
		\bibitem{MMNP} \sc Mata, D., Moreno-Franco, H., Noba, K., and P\'erez, J.L. \rm{On the bailout dividend problem with periodic dividend payments for spectrally negative Markov additive processes}. {\em Nonlinear Analysis: Hybrid Systems}, {\bf 48}, 101332, (2023).
		%\bibitem{MNPY2023} \sc Mata, D., Noba, K, P\'erez, J.L., and Yamazaki, K. \rm Optimal dividends and capital injection: A general L\'evy model with extensions to regime-switching models. {\it Insurance: Mathematics and Economics,} {\bf 119}, 210--225, (2024). 
		\bibitem{MFP24} \sc Moreno-Franco, H. and P\'erez, J.L. \rm On the Bailout Dividend Problem with Periodic Dividend Payments and Fixed Transaction Costs. {\it ArXiv: 2403.16077}, (2024)
		\bibitem{Nob2019}\sc Noba, K. \rm On the optimality of double barrier strategies for L\'evy processes.  Stochastic Process. Appl. {\bf 131} (2021), 73--102.
		\bibitem{Nob2023}\sc Noba, K. \rm On the optimality of the refraction-reflection strategies for L\'evy processes.  Stochastic Process. Appl. {\bf 160} (2023), 174--217.
		\bibitem{NobPerYu}{\sc Noba, K., P\'erez, J.L., and Yu, X.} \rm On the bailout dividend problem for spectrally negative Markov additive models. {\em SIAM J. on Control and Optimization}, {\bf 58}(2), 1049--1076, (2020).
		\bibitem{NobPerYamYan} \sc Noba, K., P\'erez, J.L., Yamazaki, K. and Yano, K. \rm On optimal periodic dividend and capital injection strategies for spectrally negative L\'evy models. {\it J. Appl. Prob.}, {\bf 55}, 1272--1286, (2018).
		\bibitem{NobPerYamYanb}{\sc Noba, K., P\'erez, J.L., Yamazaki, K. and Yano, K.} \rm On optimal periodic dividend strategies for L\'evy risk processes. {\em Insurance: Mathematics and Economics}, {\bf 80}, 29--44, (2018).
		\bibitem{NobYam2022} \sc Noba, K. and Yamazaki, K. \rm On stochastic control under Poisson observations: Optimality of a barrier strategy in a general \lev model. {\it J. Appl. Prob.}, {\bf 62}, to appear, (2025).
		\bibitem{NobYam2020}\sc Noba, K. and Yamazaki, K. \rm Singular control for L\'evy processes. {\it Math. Oper. Res.}, {\bf 48} (3), 1213--1234, (2023).
		%\bibitem{Pro2005} \sc Protter, P.  {\it Stochastic integration and differential equations. 2nd Edition}, Springer, Berlin, (2005).
		\bibitem{PerYam} \sc P\'erez, J.L., and Yamazaki, K. {\rm On the optimality of perioric barrier strategies for a spectrally positive L\'evy process.} {\it Insurance: Mathematics and Economics,} {\bf 77}, 1--13, (2017). 
		%\bibitem{PerYam2018} \sc P\'erez, J.L., and Yamazaki, K. {\rm Mixed periodic-classical barrier strategies for \lev risk processes.} {\it Risks}, {\bf 6}(2), 33, (2018).
		\bibitem{PYY2018} \sc P\'erez, J.L., Yamazaki, K., and Yu, X. {\rm The bail-out optimal dividend problem under the absolutely continuous condition.} {\it J. of Optimization Theory and Applications} {\bf 179}(2), 553--568, (2018).
		%\bibitem{RevYor1999}  \sc Revuz, D., Yor, M.:  {\it Continouus martingales and Brownian motion. 3rd Edition}, Springer, Berlin, (2005).
		\bibitem{Wang2001} \sc Wang, H. \rm Some control problems with random intervention times. {\it Adv. Appl. Probab.} {\bf 33}(2), 404--422, (2001).
		%\bibitem{Yam2017}\sc Yamazaki, K. \rm Inventory control for spectrally positive L\'evy demand processes. {\it Math. Oper. Res.}, {\bf 42} (1), 212--237, (2017).


	\end{thebibliography}
\end{document}